%%%%%%%%%%%%%%%%%%%%%%
%1. This version was submitted to Advances in Applied Probability at Apr. 20, 2015.
%%%%%%%%%%%%%%%%%%%%%%%%%

%%%%%%%%%%%%%%%%%%%%%%%%%%%
%2. This version was modification of the previous to meet the requirements from AAP. May 28, 2015.
%%%%%%%%%%%%%%%%%%%%%%%%%%%

%%%%%%%%%%%%%%%%%%%%%%
%3. This version was submitted to Mathematical Methods in Operations Research after rejected by AAP at Jun. 10, 2015.
%%%%%%%%%%%%%%%%%%%%%%%%%

%%%%%%%%%%%%%%%%%%%%%%
%4. This version was submitted to European Journal of  Operational Research after rejected by Mathematical Analysis and Applications at 4. 10, 2016.
%%%%%%%%%%%%%%%%%%%%%%%%%

%%%%%%%%%%%%%%%%%%%%%%
%5. This version was submitted to  Operations Research  at 6, 5, 2016.
%%%%%%%%%%%%%%%%%%%%%%%%%

%%%%%%%%%%%%%%%%%%%%%%
%6. This version was submitted to  Mathematics of Operations Research  at 4, 9, 2016.
%%%%%%%%%%%%%%%%%%%%%%%%%

%%%%%%%%%%%%%%%%%%%%%%%%%
%---------------------------------------------\documentclass{article}%
%\usepackage[tbtags]{amsmath}
%\renewcommand{\theequation}{\arabic{section}.\arabic{equation}}
%==============================
%Backup:
%We will also use the enlarged filtrations ${\cal \tilde{F}}_{t}^{k}={\cal F}  _{t}^{k}\vee_{j\neq k}{\cal F}  _{\infty}^{j}$.
%===============================

\documentclass[12pt]{article}
%%%%%%%%%%%%%%%%%%%%%%%%%%%%%%%%%%%%%%%%%%%%%%%%%%%%%%%%%%%%%%%%%%%%%%%%%%%%%%%%%%%%%%%%%%%%%%%%%%%%%%%%%%%%%%%%%%%%%%%%%%%%%%%%%%%%%%%%%%%%%%%%%%%%%%%%%%%%%%%%%%%%%%%%%%%%%%%%%%%%%%%%%%%%%%%%%%%%%%%%%%%%%%%%%%%%%%%%%%%%%%%%%%%%%%%%%%%%%%%%%%%%%%%%%%%%
\usepackage{graphicx}
\usepackage{amsfonts}
\usepackage{amssymb}
\usepackage{amsmath}
\usepackage{enumerate}

\setcounter{MaxMatrixCols}{10}
%TCIDATA{OutputFilter=Latex.dll}
%TCIDATA{Version=5.50.0.2890}
%TCIDATA{<META NAME="SaveForMode" CONTENT="1">}
%TCIDATA{BibliographyScheme=Manual}
%TCIDATA{LastRevised=Sunday, January 25, 2015 11:19:54}
%TCIDATA{<META NAME="GraphicsSave" CONTENT="32">}

\providecommand{\U}[1]{\protect\rule{.1in}{.1in}}
\setlength{\headsep}{0cm}
\setlength{\oddsidemargin}{0cm}
\setlength{\textwidth}{16.5cm}
\setlength{\textheight}{22.5cm}
\renewcommand{\baselinestretch}{1.5}

\newtheorem{theorem}{Theorem}[section]

\newtheorem{assumption}{Assumption}[section]

\newtheorem{definition}{Definition}[section]

\newtheorem{lemma}{Lemma}[section]

\newtheorem{remark}{Remark}[section]

\newenvironment{proof}[1][Proof]{\noindent\textit{#1.} }{\ \hfill\rule{0.3em}{0.5em}\\}
\def\E{{\rm E}}
\begin{document}

\title{A General Framework of Multi-Armed Bandit Processes\\by Arm Switch Restrictions\thanks{%
This research was partially supported by: (1) Natural Science Foundation of China (Nos. 71531003, 71432004), Research Grants Council of Hong Kong (No. T32-102/14), and the Leading Talent Program of Guangdong Province (No. 2016LJ06D703) to Xiaoqiang Cai and (2) Natural Science Foundation of China (Nos. 71371074, 71771089) and  the 111 Project (grant No. B14019) to Xianyi Wu.}}

\author{Wenqing Bao$^{1}$, Xiaoqiang Cai$^2$, Xianyi Wu$^{3}$ \and  \and $^{1}$ Zhejiang Normal University,
Jinhua Zhejiang, P.R. China \and $^2$
The Chinese University of Hong Kong (Shenzhen) and \and
The Shenzhen Research Institute of Big Data, Guangdong , P.R. China \and $^{3}$ East China Normal
University, Shanghai, P.R. China}
\date{}
%\tableofcontents

\maketitle

\begin{abstract}
%This paper is devoted to  optimal allocation of  multi-armed bandit processes  evolving in continuous time, in which each  arm is associated with a random time set and a switch from an arm to another is allowed only when its processing time belongs to that random time set.
This paper proposes a general framework of multi-armed bandit (MAB) processes by introducing a type of restrictions on the switches among arms evolving in continuous time. 
 The Gittins index process is constructed for any single arm subject to the restrictions on switches  and then the optimality of the corresponding Gittins index rule is established.  The Gittins indices defined in this paper are consistent with the ones for  MAB processes in continuous time, integer time, semi-Markovian setting as well as general discrete time setting, so that the new theory covers the classical models as special cases and also applies to many other situations that have not yet been touched in the literature. While the proof of the optimality of Gittins index policies benefits from  ideas in the existing theory of MAB processes in continuous time, new techniques are introduced  which drastically simplify the proof.

%\bigskip
\noindent \textit{Keywords}: Multi-armed bandit process; Gittins index;
 restricted stopping time;  stochastic scheduling; stochastic process.

 %\noindent\textit{Review area}: Stochastic Models

%\noindent\textbf{MSC2010 Classifications}: 90B36, 60G40, 90C40.
\end{abstract}

\section{Introduction}

 Multi-armed bandit  processes (MAB in short) model the resource allocation problem with uncertainties where a decision maker attempts  to optimize  his decisions based on the existing knowledge, so as to maximize his expected  total reward over time (Gittins et al., 2011). It has extensive applications in clinical trials, design of experiments,  manufacturing systems, economics, queuing and
communication  networks,   control  theory,  search  theory,   scheduling, machine learning/reinforcement learning etc.

In this paper we are concerned with a general multi-armed bandit problem with restricted
random stopping time sets, which can be roughly described as follows: There is a multi-armed bandit process  consisting of a set of $d$ statistically
independent arms evolving in continuous time among which a
resource  (time, effort) has to be allocated. Every arm is associated with a restricted stopping time set, in the sense that the arm must be engaged exclusively if its operation time does not belong to the stopping time set. The allocation respects
the restrictions and any engaged arm accrues rewards
that are represented as a general stochastic process.
The objective is to maximize the total expected discounted reward over an
infinite time horizon.

The early versions of \textit{discrete-time} MAB processes in Markovian and semi-Markovian fashions
have been well understood
due to the pioneer work of Gittins and Jones
 (1972) and subsequently the seminal contributions  of Gittins  (1979, 1989) and Whittle
 (1980, 1982). The significance of Gittins' contribution is the drastic
dimension reduction: Instead of solving the optimal problems of the Markov
 (or semi-Markov) decision models formed by all arms, one only needs to
compute an index function  of the states based merely on the information
delivered by each arm itself and then picks an arm with the highest index to operate. That index function, known generally as Gittins Indices today, was defined by Gittins as the maximum reward rate over all arm-specified stopping times, Whittle (1980) provided a mathematically
elegant proof by showing that Gittins index policies solve the optimality equations
of the corresponding dynamic programming modeling the multi-bandit
processes. For general reward processes in integer time (without Markovian assumption), Varaiya et. al.  (1984) defined an
optimal policy in abstract terms by reducing every $d$-armed problem
to $d$ independent stopping problems of the type solved by Snell  (1952).
Mandelbaum  (1986) proposed a technically convenient framework by formulating  a control problem with time parameters in a multidimensional,
partially ordered set. EL Karoui and Karatzas  (1993) presented a
mathematically rigorous proof of Gittins index policies for arbitrary stochastic processes evolving in
integer times by combining the formulation of Mandelbaum  (1986) with ideas
from Whittle  (1980). The most general treatments for discrete time setting can be found in Cai, et al. (2014, Section 6.1) and Cowan and Katehakis  (2015) by dropping the Markovian property from the semi-Markovian model so that switches from one arm to another can only take place at certain time points and the intervals between any pair of consecutive points are random quantities.
%This extends the semi-Markovian setting to a general one with ``states'', where the Markovian assumption is replaced by filtrations standing for the information histories. 
One key feature in discrete time setting is that the switches from any arm can only occur in countably many time instants, even though the arms can evolve continuously over the time horizon. This type of problems are referred to as general discrete time setting . Some aspects of the theory in the discrete time version and applications  in
searching, job scheduling, etc.,  can also be found in the comprehensive monograph by Gittins
et al.  (2011) .

The parallel theory for MAB processes in \textit{continuous time} was not developed until a later time, due to mainly the technical intricacy in mathematics, where the term ``continuous time'' emphasizes not only that rewards can be continuously collected but, most significantly in mathematics, that  switches from one arm to another are allowed to be made at arbitrary time points in $ (0, \infty)$ also, such that the time set for an arm from which switches can be made is the whole positive axis, i.e., essentially uncountable, sharply in contrast to the discrete time version in which the switches are essentially countable.
It is consensus  that continuous time stochastic processes are far more difficult to attack than their discrete time versions, due to the difficulties in dealing with the measurability of the quantities involved. 
As to the continuous time version of the problem in a
Markovian case, relevant results were first obtained by Karatzas  (1984) and
Eplett (1986). By insightfully formulating the model as a stochastic control problem
for certain multi-parameter processes, Mandelbaum  (1987)
extended the problem to a general dynamic setting. Based on Mandelbaum's
formulation, EL Karoui and Karatzas  (1994) derived general results by combining
martingale-based methodologies with the retirement option designed by Whittle  (1980) for his elegent proof of the optimality of Gittins index policies in discrete time. These results were further revisited by Kaspi
and Mandelbaum  (1998) with a relatively short and rigorous proof by means of  excursion
theory.

To sum up, studies on MAB processes have treated only the two regular ends: the discrete time version (including the  general discrete time setting)
in which switches from any arm to another are at most countably infinite, and the continuous time version in which the controller can switch from one arm to another in any time point in the positive time horizon, with technically different methods.

Clearly, in between the two regular ends, there exist many real-life situations that could not be put in the framework formed by solely either of the two versions, especially when there are technical restrictions on the switch times of the arms.
As an example, consider a simple job scheduling scenario subject to machine breakdowns  (see, e.g., Cai et al, 2014), in which a single unreliable machine is to process a set of jobs and, in serving the jobs, the machine may be subject to breakdowns from time to time, caused by, for instance, damage of components of the machine or power supply. When the machine
is workable, a job can be processed and the processing can be preempted so as to switch
the machine to any one of the unfinished jobs. Once the machine is
broken down, it must be continuously repaired until it can resume its operation
again. In this scenario, the stopping times for the machine to be switched from one job to another are restricted to the time interval
in which the machine is in good condition. By associating the repairing duration of the machine to the job being processed, this problem can be modeled by a multi-armed bandit process. This bandit process, however, cannot  be put in any of the frameworks of discrete time and  continuous time bandit processes, owing to  two significant features: First, for any job, the set of its potential switching times are essentially continuum in the interval in which the machine is workable so that the framework cannot be the discrete time version. Second, in the time intervals of machine reparation,  a switch from the job is prohibited so that the framework cannot be the continuous time version. 
As another example that the classical MAB models cannot accommodate, consider a second job scheduling problem in which some of the jobs can be preempted at any time points, whereas the other jobs consist of a number of nonpreemptable components so that, once a job is selected to process, it could not be preempted until  the completion of a component. This problem can be translated to such an MAB formula that  some arms evolve in continuous time setting and the others respect to a discrete time mechanism.  
Furthermore, one can even image such situations where  jobs consist of possibly preemptable and  nonpreemptable components, so that, being represented as MAB models, the arms can be  in continuous time,  discrete time version or in a mixture mode in which the switch times  contain both continuum and discrete time parts.
Clearly, the existing optimality theory of MAB processes is not applicable to these situations.

This paper is dedicated to  propose
a new MAB process model so as to accommodate these situations. This is accomplished  by introducing  a type of restrictions on switch times, or equivalently the arm-specified stopping
times as what discussed recently in Bao et al (2017) for restricted optimal stopping problems. 
Firstly, it turns out that this new model also unifies the existing versions of MAB processes.
Specifically, for the  sole discrete time version, the switching times of every arm are only the integer times, for the  general discrete time version, the switching times are clearly the end points of the intervals  during which no switch is  allowed and the purely continuous time setting corresponds simply to the case of no restriction (see Section \ref{ModelSpecificationSection} for details).
Moreover, an obvious merit of this new framework is, by introducing different restrictions on different arms, it can give the optimal solution to irregular cases in which some of the arms follow continuous time,  some others follow discrete time  and still others even respect more complicated mixtures;
see the examples above. Such important types of MAB processes have not yet been touched in the existing literature.

To successfully tackle this problem, we will combine the martingale techniques as employed by EL Karoui and Karatzas  (1994) with the excursion method similar to that used by Kaspi and Mandelbaum
 (1998), but now under the new framework of  general $d$-armed bandit processes with each arm attached with a restricted stopping time set.
%It is worth noting that,
% %analogy to the situation of MAB processes,  except for
% in addition to practical applications  involving restricted stopping time sets in optimal stopping problems, a theoretical advantage of restricted sets of stopping times
% % for optimally stopping gain processes
% is  a unified approach that can deal with  discrete time, continuous time, and   general discrete time, which have so far been separately treated in the literature as we discuss above.

The main contribution of this paper consists of the following:
\begin{enumerate}
%\item The framework we develop unifies  all the MAB process models in the literature.
%\item It provides a new theory that can be applied to situations that could not be solved by the existing theory of MAB processes.
%\item The new model we formulate is substantially different from the one separately discussed by  Cai, Wu and Zhou (2014, Section 6.1) and  Cowan and Katehakis  (2015).
%In our model, the set of switching points (referred to as allowable time set below) for an arm can be random and incountable, whereas in Cai, Wu and Zhou (2014) and Cowan and Katehakis  (2015), the set of switching points for %an arm can only be random and countable. The continuum of the switching points is the main obstacle in theoretical analysis because it may cause the curse of measurability in mathematics, whereas, in the case of countably many %switch times, there is no question of measurability.

\item[(1)] We develop a general and new framework of MAB processes, suggest correspondingly a general definition of Gittins indices and demonstrate their optimality in arm allocation under switch time restrictions. This framework generalizes and unifies the models, methodologies and theory for  all versions of MAB processes and can apply to more other situations.
%which have so far been treated separately in the literature. It includes new problems that have not been addressed
%in the existing literature  but occur practically.

%\item[(2)] By extending the optimal stopping theory to allow
%restricted stopping times, we derive the optimal allocation strategies
%for general multi-armed bandit processes with each arm
%attached  with a restricted stopping time set.

\item[(2)] While the proof follows the ideas partly from EL Karoui and Karatzas  (1994) and  partly from Kaspi and Mandelbaum  (1998), new techniques (e.g., the discounted gain process \eqref{gain process} and Lemma \ref{lemma_pf_Th_1}) are introduced such that the proof is drastically shorter than the ones  for the unrestricted MAB processes in continuous time.

\end{enumerate}

The reminder of the paper is organized as follows.
%\begin{enumerate}
Section \ref{ModelSpecificationSection}  formulates the restricted MAB processes with each arm associated with a restriction on stopping times.  
 After a concise review of the theory of optimal stopping times
 with restrictions in Section \ref{rst_e}  so as to prepare some necessary theoretical foundation, Section \ref{GittinsIndexProcessSection} associates each arm with a Gittins index process defined under the restrictions on stopping times, which unifies and extends the classical definitions for discrete time, continuous time and semi-Markovian setting. The properties of the Gittins index process are also addressed there.   Section \ref{rsmb_proof_G_P} is dedicated to demonstrate the optimality of Gittins index policies. The paper is concluded in Section \ref{Conclusions} with a few remarks.
%\end{enumerate}

\bigskip

% Section \ref{rsmb_single-arm} and \ref{rsmb_proof_G_P} give a
%proof for the result, Section \ref{rsmb_proof_G_P} also presents a
%compositive two-armed bandit.
\section{Model Specification}\label{ModelSpecificationSection}
\setcounter{equation}{0}

The  MAB processes for which the switches among arms are subject to restrictions are  referred to as ``restricted multi-armed Bandit processes" (RMAB processes). 
 
%%%%%%%%%%%%%%%%%%%%%%%%%%%%%%%%%%%%%%%%%%%%%%%%%%%%%%

In this paper, a  RMAB process refers to a stochastic control process governed by the following
mechanism.
The primitives are $d$ stochastic processes $ (X^{k}, {\cal F}  %
^{k}), k=1, 2, \ldots, d$, evolving on $\mathbb{R}_{+}=[0, +\infty)$, all of
which are defined on a common probability space $ (\Omega, {\cal F} , P)$
to represent $d$ arms, meeting the following formulation:

\begin{enumerate}
\item {\textbf{Filtrations}}. For every $k\in\{1, 2, \dots, d\}$, ${\cal F}  %
^{k}=\{{\cal F}  _{t}^{k}, t\in\mathbb{R}_{+}\}$ is a
quasi-left-continuous filtration satisfying \textit{the usual conditions}
and ${\cal F}  _{0}=\{\emptyset, \Omega\}, $ mod $P$.
%,  to indicate the information accumulated due to arm $k$ when it has
%been operated for $t$ units of time.
%Conventionally write ${\cal F}  _{\infty}^{k}=\bigvee_{t=0}^\infty{\cal F}_t$.
The collection $\{{\cal F}  ^{1}, {\cal F}  ^{2}, \ldots, {\cal F}  %
^{d}\}$ of filtrations are assumed to be mutually independent.

\item {\textbf{Rewards}}. For every $k\in\{1, 2, \dots, d\}$,  $X_{t}^{k}\geq0$, the instant reward rate obtained at the moment when arm $k$ has just been
pulled for $t$ units of time,  is assumed to be ${\cal F}  ^{k}$-progressive and, with no loss of generality,  satisfies $
\E \left[ \int_{0}^{\infty}e^{-\beta t} X_{t}^{k}dt \right]  < \infty
$.

\item {\textbf{Restrictions}}. Let ${\cal M}^k$ be an  $%
{\cal F}  ^k$-adapted random time set,  referred to as the \textit{feasible
time set} of arm $k$, satisfying $0, \infty\in{\cal M}^k$ and ${\cal M}%
^k (\omega)=\{t: (t, \omega)\in{\cal M}^k\}$ is closed for every $\omega\in
\Omega$. For an ${\cal F}  ^k$-stopping time $\tau$, also write $\tau\in%
{\cal M}^k$ if $ (\tau, \omega)\in {\cal M}^k$ almost surely;  the symbol ${\cal M}^k$ refers to both a random set and the set of  stopping times $\tau$ with $ (\tau, \omega)\in {\cal M}^k$ a.s.. Here ${\cal M}^k$ may vary over $k$, subject to different
requirements.
%Let ${\cal S}=\prod_{k=1}^d {\cal M}^k$ stand for the
%$d$-fold Cartesian product of ${\cal M}^k, k=1, \cdots, d$.
\item {\textbf{Policies under restrictions}}. An allocation policy $T$ is characterized by a $d$-dimensional stochastic process %
$
T:=\{T (t):t\in\mathbb{R}_{+}\}=\{ (T^{1} (t), T^{2} (t), \ldots, T^{d} (t)):t\in%
\mathbb{R}_{+}\},
$
where $T^{k} (t)$ is the total amount of time that $T$ allocates to arm $k$
during the first $t$ units of calendar time, satisfying the following technical requirements: \vspace{-2mm}

\begin{itemize}
\item[ (1)] $T (t)$ is component-wise nondecreasing in $t\geq 0$ with $T (0)=%
\mathbf{0}$.

\item[ (2)] $T^{1} (t)+T^{2} (t)+\cdots+T^{d} (t)= t$ for every $t\geq 0$.

\item[ (3)] For any nonnegative vector $s= (s_1, s_2, \dots, s_d)\in \mathbb{R}%
^n_+ $, $\{T (t)\leq s\}\in {\cal F}  ^{1}_{s_1}\vee\cdots\vee{\cal F}  %
^{d}_{s_d}$.

\item[ (4)] $\frac{d^+T^{k} (t)}{dt}=1$ if $ (T^k (t), \omega)\in {\cal M}^{k}_c:=\mathbb{R}_+\times\Omega-{\cal M}^{k}$,
where   ${\frac{d^+}{dt}}$ indicates the right
derivative.
\end{itemize}
\item{\bf Objective.} With any policy $T$, {the total reward of the
bandit in calendar time interval $[t, t+dt]$ is} $%
\sum_{k=1}^dX^k_{T^k (t)}dT^k (t)$, so that
the total expected present value of this $d$-armed bandit system is
\begin{equation}
v (T)=\sum_{k=1}^d\E \left[\int_0^{\infty}e^{-\beta t} X^k_{T^k (t)}dT^k (t)\right],
\label{rsmb_eq_1}
\end{equation}where $\beta>0$ indicates the interest rate.
 The objective
%of the bandit problem
 is to find a policy $\hat{T}$  such that
$
v (\hat{T})=\max_{T}v (T)  \label{rsmb_eq_2}
$,
where the maximization is taken over all the policies characterized above.
\end{enumerate}

The following remarks give more details on the formulation of RMAB processes.

\begin{enumerate}[(a)]

 \item For the reward processes, the requirement $
\E \left[ \int_{0}^{\infty}e^{-\beta t} X_{t}^{k}dt \right]  < \infty,
k=1, 2, \dots, d$  makes the problem nontrivial,
 because, supposing it does not hold for some $k$,
 % i.e., $\E \int _{0}^{\infty}e^{-\beta t}X_t^{k}dt =\infty$,
then one can optimally obtain an
infinite expected reward by operating arm $k$ all the time.

 \item While, from a practical point of view, policies satisfying $T^{1} (t)+T^{2} (t)+\cdots+T^{d} (t)\leq t$ for every $t\geq 0$ allow for machine idle and are also practically feasible and can contain more policies than  those defined by condition (2) in the ``Policies under restrictions'' which does not allow
for machine idle. Nevertheless, by introducing a dummy
arm with constantly zero reward rate, constant filtration and the trivial
feasible random time set $[0, \infty)$, the setting in condition (2) can
model this more realistic situation.

 \item Conditions (1) -- (3) in  ``Policies under restrictions'' are similar to those in  Kaspi and Mandelbaum
 (1998), whereas condition (4) that is new captures the feature of restricted
policies that the machine can operate  arm $k$ at a rate strictly less than $1$ only when its
operation time is in ${\cal M}^k$; in other words, if $T^k (t)\in
{\cal M}^k_c$, then at time $t$, the machine can only
be occupied by  arm $k$ exclusively. 

 \item Clearly, the setting we have just formulated subsumes classical versions in discrete time, continuous time and  general discrete time setting, as discussed below:
 \begin{enumerate}[i)]
 \item Because ${\cal M}^k= (\mathbb{N}%
\cup\{\infty\})\times\Omega$ indicates that arm $k$ can be switched at only integer times, an integer time MAB process corresponds to a RMAB process in which ${\cal M}^k= (\mathbb{N}%
\cup\{\infty\})\times\Omega$ for every $k=1,2,\dots, d$.
\item  In the case of a \textit{semi-Markov process}, let $G_t^k$ be the state of the process and denote by $\tau_n^k$, $n=0, 1, \dots$, the time instants at which $%
G^k_t$ makes transitions, with $\tau^k_0=0$. Arm $k$ 
can only be switched  only at the time instants $\tau_n^k, n=0, 1, \dots$, so that 
\begin{equation}\label{semiMarkovianMAB}
{\cal M}^k=\{ (\tau_n^k (\omega), \omega): n=0, 1,
\dots, \omega\in\Omega\}\cup\{ (\infty, \omega):\omega\in\Omega\}. 
\end{equation}
A semi-Markovian MAB corresponds to a RMAB process with every ${\cal M}^k, k=1,2,\dots, d$ of the form  in \eqref{semiMarkovianMAB}. 
\item We in this item show how the RMAB processes can be reduced to  general discrete time MAB processes. Let
  $\{s_n:n\geq 1\}$ be a sequence of  increasing ${\cal F}^k$-stopping times at which arm $k$ can be  can be stopped to switch to another arm, satisfying $\Pr(s_n\geq s_{n-1})=1$ for all $n=1,2,\dots$ and $\lim_{n\rightarrow\infty}s_n=\infty$ a.s.. 
Clearly, for this example, 
\begin{equation}\label{ general discrete time}
{\cal M}^k=\{(s_n(\omega),\omega):n=0,1,\dots,\omega\in\Omega\}\cup\{ (\infty, \omega):\omega\in\Omega\}. 
\end{equation}
Also, an  general discrete time MAB corresponds to a RMAB process with every ${\cal M}^k$ having the form in Equation \eqref{ general discrete time}. This model extends the semi-Markov model by dropping the Markovian property in the transition. Note that this model essentially covers MAB in discrete time, because the evolving of the process   in between $s_{n-1}$ and $s_n$ are irrelevant for the purpose of making decision on stopping  at those stopping times $s_k,k=1,2,\dots$. It was discussed in Cai et al (2014, Section 6.1) and Cowan and Katehakis  (2015) when they discussed their multi-armed bandit processes. Clearly, RMAB process clearly covers  general discrete time model as a special case, but not vice versa because, as just stated, RMAB process covers the continuous time version of MAB whereas that discrete time version of MAB does not. 
\item   If ${\cal M}^k=[0, \infty]\times \Omega $, arm $k$ is an arm in \textit{continuous time} in which one can stop at any time,%\textbf{\ In the sequel,
%denote the set of all stopping times in ${\cal M}_{2}$ by ${\cal T}$.}
 and for optimal stopping
problem in \textit{discrete time}.

 \end{enumerate}
 \item Moreover, the restrictions allow one to tackle many more situations.
 Here is a selection of some examples, for all of which but the first the existing theory for MAB processes cannot apply.
\begin{enumerate}[i)]
\item If the case ${\cal M}^k=\{0, +\infty\} $, then ${\cal M}^k_c=\mathbb{R}_+\times \Omega $, so that the arm $k$ will be operated exclusively forever once it is picked. Obviously, it corresponds a nonpreemptable arm.
\item If ${\cal M}^k= [0, \tau]\cup\big\{n: n \hbox{ is positive integer in between }[\tau,\{+\infty\}\big\}\cup [+\infty]$,
 where $\tau$ is an ${\cal F}^k$-stopping time, so that
${\cal M}^k_c= (\tau,\infty)$, then switches from arm $k$ are all time points no larger than $\tau$ and the integer time points larger than $\tau$.
\item Let $s^k_n, n=1,2,\dots, \infty$ be a sequence of ${\cal F}^k$-stopping times increasing in $n$ and ${\cal M}^k= \bigcup_{n=1}^\infty[s_{2n-1},s_{2n}] \bigcup\{0,\infty\}\times \Omega$. Then arm $k$ can only be switched from at its private random time intervals $[s_{2n-1},s_{2n}], n=1,2,\dots$ whereas in its private time intervals $(s_{2n-2},s_{2n-1}), n=1,2,\dots$, the occupation of machine by this arm is exclusive.
\item  One can treat MAB processes of multiple types of arms, where operation on some of the arms can be switched to other arms at any time (corresponding to a continuous time setting) but operation on some other arms can only be switched when the machine has been served for  integer amount of time (discrete time setting) or when the state of the arm is just transferred in the case of the semi-Markovian setting. Some arms can even be nonpreemptable.
 
\end{enumerate} \end{enumerate}
 
% Hence, the complete solution to RMAB processes may provide deeper insight into the structure of MAB processes.
\section{Gittins Indices for A Single-Arm Process}
\label{rsmb_single-arm}
\setcounter{equation}{0}
After the RMAB processes were formulated in the last section, we now associate each arm with an appropriately defined Gittins index process, which unifies and extends the classical definitions for discrete time, continuous time and  general discrete time setting. 
Because we consider  only a single arm so as to define the associated Gittins index process and demonstrate its desired
properties, for the
time being,  the arm identifier $k$ is suppressed for the time being for notation convenience. Hence we work only with  a single stochastic process  $G= (G_{t})_{t\in\mathbb{\bar{R}}_{+}}$ that is ${\cal F}  $-adapted
 on a filtered probability space $ (\Omega, {\cal F} , P)$,  equipped with a quasi-left-continuous
filtration ${\cal F}  = (%
{\cal F}  _{t})_{t\in\mathbb{\bar{R}}_{+}}$  satisfying the \textit{usual conditions} of right continuity and augmentation by the null sets of ${\cal F}_\infty$, where $\mathbb{\bar{R}}_{+}=[0, +\infty]$. 
 To $ (\Omega, {\cal F} , P)$ is associated with a
random set  ${\cal M}$ to represent the restricted feasibility on the stopping times,  as defined in Section \ref{ModelSpecificationSection}.

This section consists of two parts: Section \ref{rst_e} gives a concise review of restricted optimal stopping times with some material taken from Bao et al. (2017), which is put here for easy reference and  in Section  \ref{GittinsIndexProcessSection} we define the Gittins index process  induced over a single arm and gives its details.

\subsection{Optimal stopping times under the restrictions}
\label{rst_e} 
%This section address the properties of a single arm with restricted stopping times.

%Section \ref{frmlt} formulates the optimal stopping problem with restricted stopping times, Section \ref{sec_generality} discusses the merits of this new framework,  and Section \ref{slt} establishes  the solution  by Theorems \ref{rts_Th1} and \ref{Th:2} with a series of lemmas. 
%\subsection{Formulation\label{frmlt}}
%\subsubsection{Mathematical formulation}
%The constrained optimal stopping problem is formulated as follows:

%The introduction of a restricted stopping time set $\cal M$ unifies and generalizes the classical
% theory of stochastic processes in discrete time, continuous time,  and  general discrete time setting to the framework of continuous time stochastic processes with restricted stopping times, especially in the analysis  of optimal stopping problems. This framework contains the classical models as simple or trivial special cases, as illustrated in Examples below. 
 
 The optimal stopping time problem with restrictions, denoted by $ (\Omega, {\cal F} , P, {\cal M})$, is defined as the following: For an arbitrary stopping time $\nu\in [0, \infty]$ (unnecessarily in $\cal M$), find a optimal stopping times $\tau^*\in\cal M$ such that
\begin{equation}
Z_{\nu}:=\E [G_{\tau^* }|%
{\cal F}  _{\nu}]=\operatorname{esssup}_{\tau\in{\cal M}_{\nu}}\E [G_{\tau }|%
{\cal F}  _{\nu}], \label{rts_eq:3}
\end{equation}
where  esssup stands for the operation of essential supremum, ${\cal M}_\nu=\{\tau\geq\nu: \tau \in\cal M \}$ and  $G$ is assumed to satisfy the following assumptions:
\begin{assumption}
\label{rts_assumption_1}
\hfill
\begin{description}
\item (1). $G$ has almost surely right continuous paths.

\item (2). $\E \left[ \sup_{t\in\mathbb{\bar{R}}_{+}}|G_{t}|\Big|%
{\cal F}  _{0}\right]  < {\infty}$.

\item (3). $\E [G_{\infty}] \geq \displaystyle\limsup_{t\rightarrow%
\infty}\E [G_{t}]$.% is taken for granted to ensure the existence of an optimal stopping time.
%
%\item (4). $\infty\in{\cal M}$ and ${\cal M} (\omega)=\{t: (t, \omega)\in%
%{\cal M}\}$ is closed for every $\omega\in \Omega$.
\end{description}
\end{assumption}
By Bao et al. (2017), problem \eqref{rts_eq:3} is solved by the following two theorems that are cited here for later reference. 
The first theorem characterizes the optimal stopping times should they
exist.
\begin{theorem}
\label{rts_Th1} The following three statements are equivalent for any $
\tau_{\ast}\in{\cal M}_{\nu}$:

\noindent (a) $\tau_{\ast}$ is  optimal for problem (\ref{rts_eq:3}), i.e., $%
Z_{\nu}=\E [G_{\tau_{\ast}}|{\cal F}  _{\nu}]$;

\noindent (b)   The stochastic process $\{Z_{\tau_{\ast
}\wedge (\nu\vee t)}:t\in\mathbb{\bar{R}}^{+}\}$ is an ${\cal F}  _{\nu\vee
t}$-martingale and $Z_{\tau_{\ast}}=G_{\tau_{\ast}}\ a.s.$;

\noindent (c) $Z_{\tau_{\ast}}=G_{\tau_{\ast}}\ a.s.$ and $Z_{\nu}=\E %
[Z_{\tau_{\ast}}|{\cal F}  _{\nu}]$.
\end{theorem}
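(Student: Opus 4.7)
The natural plan is to establish the cyclic chain $\text{(a)}\Rightarrow\text{(c)}\Rightarrow\text{(b)}\Rightarrow\text{(a)}$, which reduces the task to three short pieces. I would rely on two structural properties of $Z$ established in Bao et al.\ (2017) under Assumption \ref{rts_assumption_1}: first, $Z$ is a supermartingale with respect to arbitrary (not necessarily ${\cal M}$-valued) stopping times, so that $\E[Z_\sigma|{\cal F}_\rho]\leq Z_\rho$ whenever $\rho\leq\sigma$; second, $Z_\sigma\geq G_\sigma$ a.s.\ whenever $\sigma\in{\cal M}$, which is immediate from including $\sigma$ itself among the competitors in the essential supremum defining $Z_\sigma$.

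For (a) $\Rightarrow$ (c), chain these two properties with the optimality hypothesis to obtain
\[
Z_\nu\geq \E[Z_{\tau_*}|{\cal F}_\nu]\geq \E[G_{\tau_*}|{\cal F}_\nu]=Z_\nu.
\]
Since the two outer terms agree, both intermediate inequalities are equalities almost surely, giving at once $Z_{\tau_*}=G_{\tau_*}$ a.s.\ and $Z_\nu=\E[Z_{\tau_*}|{\cal F}_\nu]$, which is exactly (c).

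For (c) $\Rightarrow$ (b), apply the supermartingale property to the ordered family $\{\tau_*\wedge(\nu\vee t):t\in\bar{\mathbb{R}}^{+}\}$: the process $M_t:=Z_{\tau_*\wedge(\nu\vee t)}$ is an ${\cal F}_{\nu\vee t}$-supermartingale with $M_0=Z_\nu$ and $M_\infty=Z_{\tau_*}$, and hypothesis (c) yields $\E[M_\infty]=\E[M_0]$. A supermartingale whose terminal and initial expectations agree is automatically a martingale, with the uniform integrability needed for optional sampling supplied by Assumption \ref{rts_assumption_1}(2); the identity $Z_{\tau_*}=G_{\tau_*}$ is carried along directly from (c). Conversely, (b) $\Rightarrow$ (a) is immediate: evaluating the martingale identity at $t=0$ and $t=\infty$ yields $Z_\nu=\E[Z_{\tau_*}|{\cal F}_\nu]=\E[G_{\tau_*}|{\cal F}_\nu]$, the final step using $Z_{\tau_*}=G_{\tau_*}$ from (b).

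The main obstacle I anticipate is not the logical cycle itself---which is the standard Snell envelope argument once the right machinery is in hand---but rather the careful invocation of the supermartingale property at stopping times of the form $\tau_*\wedge(\nu\vee t)$, which may fall outside the restricted set ${\cal M}$. Extending the supermartingale property from ${\cal M}$-valued times to arbitrary stopping times, together with the path-regularity and optional sampling arguments this requires, is the one piece that genuinely leans on the restricted-stopping-time theory developed in Bao et al.\ (2017) rather than on classical Snell envelope theory.
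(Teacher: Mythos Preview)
Your cyclic argument $\text{(a)}\Rightarrow\text{(c)}\Rightarrow\text{(b)}\Rightarrow\text{(a)}$ is correct and is the standard Snell-envelope characterization of optimal stopping times, adapted to the restricted setting; the one subtlety you flag---that the supermartingale inequality must hold at the intermediate times $\tau_*\wedge(\nu\vee t)$, which need not lie in ${\cal M}$---is exactly the right place to be careful, and you correctly defer that to the machinery of Bao et al.\ (2017).

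Note, however, that the present paper does not actually \emph{prove} Theorem~\ref{rts_Th1}: it is quoted without proof from Bao et al.\ (2017), as the paper states explicitly just before the theorem (``By Bao et al.\ (2017), problem (\ref{rts_eq:3}) is solved by the following two theorems that are cited here for later reference''). So there is no in-paper proof to compare against; your proposal is essentially a reconstruction of the argument one would expect to find in that reference, and it is sound.
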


\noindent For any $\lambda\in (0, 1)$ and stopping time $%
\nu$, define $D_{\nu}^{\lambda}=\operatorname{essinf}\{\tau\in{\cal M%
}_{\nu}:\lambda Z_{\tau}\leq G_{\tau}\} $ and $D_{\nu}^1=\lim_{\lambda\uparrow 1}D_{\nu}^\lambda$. The following theorem indicates the existence of the required stopping time $\tau^*$. 

\begin{theorem}
\label{Th:2} %Suppose that $G$ has right-continuous paths almost surely and
%condition (\ref{rts_eq:1}) holds. Then
%
%\begin{itemize}

%\item[ (i)] $D_{\nu}^{\lambda}, \lambda\in (0, 1)$ is the optimal stopping time
%of $\lambda$;

%\item[ (ii)]
If $G_t$ is quasi-left continuous, then\\
 (1) $D_{\nu }^{1}$ is optimal for the
stopping problem (\ref{rts_eq:3}), that is,
$
Z_{\nu}=\E [G_{D_{\nu}^{1}}|{\cal F}  _{\nu}]\ a.s.
\label{rts_eq:22}$,
%\end{itemize}
\\
 (2) $
D_{\nu}^{1}=\operatorname{essinf}\{\tau\in{\cal M}_{\nu}:Z_{\tau}=G_{\tau}\}=\min\{t\geq \nu (\omega): (\omega, t)\in{\cal M}, Z (\omega, t)=G (\omega, t)\}$
a.s.,  and\\
 (3) $Z_t$ is also
quasi-left-continuous.

\end{theorem}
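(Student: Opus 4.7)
The plan is to leverage the supermartingale structure of the Snell envelope $Z$ together with the martingale characterization of optimal stopping times from Theorem~\ref{rts_Th1}. First I would verify that each $D_\nu^\lambda$ is an ${\cal F}$-stopping time in ${\cal M}_\nu$: because ${\cal M}(\omega)$ is closed and both $Z$ and $G$ are right-continuous, $D_\nu^\lambda$ is the first entry time after $\nu$ into the closed random set $\{(t,\omega)\in{\cal M}:\lambda Z_t\le G_t\}$, and the infimum is attained in ${\cal M}_\nu$. Monotonicity of $\lambda\mapsto D_\nu^\lambda$ then makes $D_\nu^1=\lim_{\lambda\uparrow 1}D_\nu^\lambda$ a stopping time belonging to ${\cal M}_\nu$.

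Next I would prove the quasi-left continuity of $Z$ claimed in~(3), since all subsequent limit passages rely on it. The Snell envelope $Z$ is a right-continuous supermartingale that can be written via Doob--Meyer as $Z=M-A$ with $M$ a c\`adl\`ag martingale and $A$ a predictable increasing process. Quasi-left continuity of the filtration gives that $M$ is quasi-left continuous; the growth of $A$ occurs only on $\{Z=G\}$, so a jump $\Delta A_\sigma>0$ at a predictable time $\sigma$ would force $Z_{\sigma-}>G_\sigma=Z_\sigma$, contradicting the quasi-left continuity of $G$ combined with the envelope inequality $Z_{\sigma-}\le\operatorname{esssup}_{\tau\in{\cal M}_\sigma}\E[G_\tau\mid{\cal F}_{\sigma-}]$. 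This yields $\Delta Z_\sigma=0$ a.s.\ at every predictable $\sigma$, as required.

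With quasi-left continuity of $Z$ in hand, for item~(1) I would show that $\{Z_{D_\nu^\lambda\wedge(\nu\vee t)}\}_{t\ge 0}$ is an $({\cal F}_{\nu\vee t})$-martingale: strictly before $D_\nu^\lambda$ one has $\lambda Z_t>G_t$, so by the previous paragraph $A$ cannot grow on this stochastic interval, leaving only the martingale part. The definition of $D_\nu^\lambda$ also gives $\lambda Z_{D_\nu^\lambda}\le G_{D_\nu^\lambda}\le Z_{D_\nu^\lambda}$; monotonicity of $D_\nu^\lambda$ in $\lambda$ and quasi-left continuity of both $Z$ and $G$ along the announcing sequence then yield both $Z_{D_\nu^1}=G_{D_\nu^1}$ a.s.\ and the martingale property of $Z_{\cdot\wedge D_\nu^1}$ on $[\nu,\infty)$, so that Theorem~\ref{rts_Th1}(b) delivers optimality.

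Finally, item~(2) follows easily: $D_\nu^1\ge\operatorname{essinf}\{\tau\in{\cal M}_\nu:Z_\tau=G_\tau\}$ is immediate from $Z_{D_\nu^1}=G_{D_\nu^1}$, while any $\tau\in{\cal M}_\nu$ with $Z_\tau=G_\tau$ satisfies $\lambda Z_\tau\le G_\tau$ for all $\lambda<1$, hence $\tau\ge D_\nu^\lambda$ and, letting $\lambda\uparrow 1$, $\tau\ge D_\nu^1$. The path-by-path representation $D_\nu^1=\min\{t\ge\nu(\omega):(t,\omega)\in{\cal M},\ Z_t=G_t\}$ then follows from right-continuity of $Z-G$ and closedness of ${\cal M}(\omega)$, which together guarantee that the infimum is attained. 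The main obstacle will be the quasi-left continuity step: controlling the predictable jumps of $Z$ in the restricted setting is delicate, because the constraint $\tau\in{\cal M}$ alters the usual characterization of where the compensator $A$ puts mass, and one must also argue that the approximation $D_\nu^\lambda\uparrow D_\nu^1$ is strict on $\{D_\nu^1>0\}$ in order to legitimately invoke quasi-left continuity at the limit.
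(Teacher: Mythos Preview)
The paper does not actually prove Theorem~\ref{Th:2}: Section~\ref{rst_e} is explicitly a review, and the text states that the two theorems are ``cited here for later reference'' from Bao, Wu and Zhou (2017). There is therefore no in-paper argument to compare against.

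Your sketch follows the classical Snell-envelope/Doob--Meyer/$\lambda$-penalization route, which is the natural approach and almost certainly the one in the cited reference. You have also correctly identified the two genuinely nonstandard points. One sharpening is worth making explicit. Your step ``strictly before $D_\nu^\lambda$ one has $\lambda Z_t>G_t$, so $A$ cannot grow'' only controls $t\in{\cal M}$, because the defining inequality $\lambda Z_\tau\le G_\tau$ in $D_\nu^\lambda$ is imposed only at $\tau\in{\cal M}$. To conclude $A$ is flat on $[\nu,D_\nu^\lambda)$ you need the restricted-envelope fact that $dA$ is carried by ${\cal M}\cap\{Z=G\}$; this follows because on each open component of ${\cal M}^c$ the process $Z$ is a martingale (indeed $Z_t=\E[Z_{\eta_t}\mid{\cal F}_t]$ with $\eta_t$ the first time in ${\cal M}$ after $t$), but it should be stated and proved rather than left implicit. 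For the announcing issue, note that on the event $\{D_\nu^\lambda=D_\nu^1\text{ for some }\lambda<1\}$ quasi-left continuity is not needed at all: letting $\lambda\uparrow 1$ in $\lambda Z_{D_\nu^1}\le G_{D_\nu^1}\le Z_{D_\nu^1}$ (the right inequality holding because $D_\nu^1\in{\cal M}$) already gives $Z_{D_\nu^1}=G_{D_\nu^1}$, and the martingale property is inherited directly. Only on the complementary event does one invoke quasi-left continuity along a strictly increasing predictable sequence.
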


\subsection{Gittins index process}\label{GittinsIndexProcessSection}
For the instant reward rate process $X_t$ and an arbitrary stochastic processes $q=\{q_t\}$ that is ${\cal F}$-adapted, pathwise right continuous, nonincreasing, bounded and nonnegative, introduce a discounted gain process %$G_q (t;m)$ obtained by receiving a payment flow at rate $X_{u}$ from time $0$ to $t$ and a constant rate $\beta m$ from $t$ onwards, both multiplied by $q_u$, i.e.,
\begin{equation}\label{gain process}
G_q (t;m)=\int_0^t e^{-\beta
u}q_uX_{u}du+\beta m \int_t^\infty e^{-\beta u}q_udu, t\in [0, \infty].
\end{equation}
Note that $q_t\equiv 1$ gives the well-known gain process with retirement option
$G (t;m)=\int_0^t e^{-\beta
u}X_{u}du+m e^{-\beta t}, t\in [0, \infty]$, which was introduced by Whittle (1980).
To any finite ${\cal F}  $-stopping time $\eta$, associate  a class of optimal stopping problems
\begin{align}\label{definition_V}
V_q (\eta, m)=&\operatorname{esssup}_{\tau\in{\cal M}_{\eta}}\E \bigg[
\left.\int_\eta^\tau\text e^{-\beta (u-\eta)}q_uX_{u}du+\beta m\int_\tau^\infty e^{-\beta (u-\eta)}q_udu%
\right|{\cal F}  _\eta\bigg],
\end{align}
 indexed by $m\in[0, \infty)$, indicating the optimal expected rewards from $\eta$ onwards.
Then, for every fixed $m\in[0,\infty)$, the optimal stopping time theory  reviewed in Section \ref{rst_e} can be translated for $V_q (\eta, m)$ to:
\begin{enumerate}
\item[ (1).] The process $Z_q (t, m)=\int_0^{t} e^{-\beta u}q_uX_{u}du+e^{-\beta
t}V_q (t, m), t\in [0, \infty]$ is a quasi-left-continuous supermartingale.

\item[ (2).] The feasible stopping time
\begin{align}\label{sigma_0}
\sigma_{\eta} (m)=&
\operatorname{essinf}\{\tau\in {\cal M}_{\eta}:Z_q (\tau;m)= G_q (\tau;m)\}\nonumber\\
=&\operatorname{%
essinf}\left\{\tau\in {\cal M}_{\eta}:V_q (\tau;m)=
 \beta m\E \bigg[\left. \int_\tau^\infty e^{-\beta (u-\tau)}q_udu%
\right|{\cal F}  _\eta\bigg]
\right\}\in{\cal M}_{\eta}
\end{align} is an optimal
solution for $V_q (\eta;m)$.

 \item[ (3).] $\{Z_q (\tau;m):\tau \hbox{ is } {\cal F}\hbox{-stopping time satisfying }\eta\leq\tau\leq \sigma_{\eta} (m)\}$ is a martingale family.
\end{enumerate}
%   Define accordingly a random variable by
%   \begin{equation}
%  \varphi (\eta, m)=\operatorname*{esssup}_{\tau\in{\cal M}_\eta}\E \left[\left.\int_\eta^\tau e^{-\beta u} (X_{u}-\beta m)du\right|{\cal F}  _\eta\right].
%   \end{equation}
%   Of course,  $\varphi (\eta, m)$ is nonincreasing, convex  in $m$ with $\varphi (\eta, 0)>0$.
%    Recall that there exists an nondecreasing sequence
%   $\{\varphi_n (\eta, m)\}$ such that $\varphi_n (\eta, m)$ is continuous in $m$ for each $n$ and $\varphi_n (\eta, m)\uparrow \varphi (\eta, m)$. By the Dominated Convergence Theorem,  $\varphi (\eta, m)$ is also continuous in $m$.
Moreover, for any finite $\eta\in{\cal M}$ and $m\in[0, \infty)$, write
\begin{align}\label{rsmb_Gittins cont1}
\varphi_q (\eta, m)=&\operatorname{%
esssup}_{\tau\in{\cal M}_{\eta}}\E \left[\left.\int_{\eta}^{\tau}e^{-%
\beta u}q_u (X_u-\beta m)du\right|{\cal F}  _\eta\right].
\end{align} It is then immediate that
\begin{equation}\label{MBP_eq03.10}
\varphi_q (\eta, m)=e^{-\beta \eta}\left[V_q (\eta;m)-\beta m\E\left (\left.\int_\eta^\infty e^{-\beta (u-\eta)}q_udu\right|{\cal F}_\eta\right)\right].
\end{equation}%

\begin{remark}\label{rmk2.1}
Given a stopping time $\eta $,  owing to the the esssup operation, even though for any  couple of nonnegative numbers $m_1<m_2$ and $\lambda\in[0, 1]$, it
is clear that
\begin{align*}
&\Pr\left\{\omega: \varphi_q (\eta, m_1)\geq \varphi_q (\eta, m_2)\hbox{ and }  \varphi_q (\eta, \lambda
m_1+ (1-\lambda)m_2)\geq\lambda \varphi_q (\eta, m_1)+ (1-\lambda) \varphi_q (%
\eta, m_2)\right\}\\
&=1,
\end{align*}
 definition (\ref{rsmb_Gittins cont1}) does not necessarily ensure
pathwise monotonicity and convexity of $ \varphi_q (\eta, m)$ in  $m$. This difficulty can be overcome by a procedure as follows. First, order the rationals
arbitrarily as $Q=\{r_1, r_2, \dots, r_n\dots\}$ and write $Q_n=\{r_1, r_2, %
\dots, r_n\}$. Let $\Omega_1=\Omega$. For $n\geq2$, denote
$
\Omega_n=\left\{\omega:  \varphi_q (\eta, \cdot)
\hbox{ is nonincreasing and
convex on }Q_n\right\}.
$
Then $\Omega_n$ {is decreasing in }$n$ and  $\Pr (\Omega_n) =1 %
\hbox{ for all } n\geq 1. $
%  Iteratively, for any $n$, find $\underline{r}_n$ and $\overline{r}_n$  in $\{r_1, r_2, \dots, r_n\}$ such that $\underline{r}_n<r_{n+1} < \overline{r}_n$. Define $\Omega_{n+1}=\{w:  \varphi_q (\eta, \underline{r}_n)\geq  \varphi_q (\eta, r_{n+1})\geq \varphi_q (\eta, \overline{r}_n)\}$.
Let $\tilde\Omega:=\bigcap_{n=1}^\infty\Omega_n$ such that $\Pr (\tilde\Omega)=1, $
and for every $\omega\in\tilde\Omega$, $ \varphi_q (\eta, m)$ is decreasing along
set $Q$. For the other (real) numbers $m$, take $ \varphi_q (\eta, m;%
\omega)$ as the limit of $ \varphi_q (\eta, r;\omega)$ along $Q$, so
that $ \varphi_q (\eta, m;\omega)$ defined as such is a decreasing and convex function of $%
m$ for every $\omega\in\tilde\Omega$. That is, we get a version of $ \varphi_q (\eta, r;\omega)$ that  is pathwise decreasing and convex in $m$ almost surely.  We will thoroughly work with this version of $%
 \varphi_q (\eta, m)$.
\end{remark}

%By Lemma \ref{proposition_wxy_1}, for any two ${\cal F} $%
%-stopping times $\sigma$ and $\tau$, $\varphi_q (\sigma, m)=\varphi_q (\tau, m)$
%a.s. on $\{\sigma=\tau\}$ for each fixed $m$
%, such that we can define
%arm-specified Gittins indices as below
%.
The following is a  fundamental property of $\sigma (m)$.

\begin{lemma}
\label{rsmb_lem_1} Given a stopping time $\eta\in{\cal M}$, $%
\sigma_\eta (m)$ is nonincreasing and right-continuous in $m$.
\end{lemma}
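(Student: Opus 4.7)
The plan is to reformulate the defining condition of $\sigma_\eta(m)$ in terms of $\varphi_q$. By \eqref{MBP_eq03.10}, $Z_q(\tau;m)=G_q(\tau;m)$ is equivalent to $\varphi_q(\tau,m)=0$, so
\[
\sigma_\eta(m)=\operatorname{essinf}\{\tau\in{\cal M}_\eta:\varphi_q(\tau,m)=0\}.
\]
The essential supremum in \eqref{rsmb_Gittins cont1} is attained at least by the trivial choice $\tau=\eta$, so $\varphi_q(\tau,m)\ge 0$ pointwise. Passing to the pathwise nonincreasing and convex version of $\varphi_q(\tau,\cdot)$ constructed in Remark \ref{rmk2.1}, for $m_1<m_2$ we have $0\le\varphi_q(\tau,m_2)\le\varphi_q(\tau,m_1)$, so $\varphi_q(\tau,m_1)=0$ forces $\varphi_q(\tau,m_2)=0$. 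Hence the defining set grows with $m$, giving $\sigma_\eta(m_2)\le\sigma_\eta(m_1)$, i.e., monotonicity.

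For right continuity, fix any $m_n\downarrow m$ and set $\sigma_n=\sigma_\eta(m_n)$. Monotonicity makes $\sigma_n$ nondecreasing in $n$ and bounded above by $\sigma_\eta(m)$; put $\sigma^*:=\lim_n\sigma_n\le\sigma_\eta(m)$. Closedness of each ${\cal M}(\omega)$ together with $\sigma_n(\omega)\uparrow\sigma^*(\omega)$ yields $\sigma^*\in{\cal M}_\eta$. Optimality of $\sigma_n$ for $\varphi_q(\eta,m_n)$ provides
\[
\varphi_q(\eta,m_n)=\E\!\left[\int_\eta^{\sigma_n} e^{-\beta u}q_u(X_u-\beta m_n)\,du\,\Big|\,{\cal F}_\eta\right].
\]
Letting $n\to\infty$: the left-hand side converges to $\varphi_q(\eta,m)$ by continuity in $m$ (the regularized $\varphi_q(\eta,\cdot)$ is convex on $[0,\infty)$, hence continuous); the right-hand side converges to $\E[\int_\eta^{\sigma^*}e^{-\beta u}q_u(X_u-\beta m)\,du\mid{\cal F}_\eta]$ by dominated convergence, with envelope $e^{-\beta u}q_u(X_u+\beta m_1)$, which is integrable by the standing hypothesis on $X$ and boundedness of $q$. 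Thus $\sigma^*$ attains the essential supremum defining $\varphi_q(\eta,m)$; applying Theorem \ref{rts_Th1}, (a)$\Leftrightarrow$(c), we conclude $\varphi_q(\sigma^*,m)=0$. This places $\sigma^*$ in the set defining $\sigma_\eta(m)$, so $\sigma^*\ge\sigma_\eta(m)$, and combined with $\sigma^*\le\sigma_\eta(m)$ we obtain $\sigma^*=\sigma_\eta(m)$.

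The only delicate step is the limit passage in the right-continuity part: continuity of $m\mapsto\varphi_q(\eta,m)$ relies on the convex regularization from Remark \ref{rmk2.1}, feasibility of the limiting stopping time $\sigma^*$ uses closedness of ${\cal M}$, and the interchange of limit and conditional expectation is then routine by domination.
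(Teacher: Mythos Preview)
Your proof is correct. The monotonicity argument is the same as the paper's (both rest on the fact that $\varphi_q(\tau,\cdot)$ is nonincreasing, so the set $\{\tau\in{\cal M}_\eta:\varphi_q(\tau,m)\le 0\}$ is nondecreasing in $m$).

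For right continuity, however, you take a genuinely different route. The paper passes to the limit \emph{in the time variable} of $\varphi_q$: using the quasi-left-continuity of $Z_q$ (Theorem~\ref{Th:2}(3)) and the bound $\varphi_q(\sigma_\eta(m+\delta_l);m+\delta_k)\le \varphi_q(\sigma_\eta(m+\delta_l);m+\delta_l)=0$ for $l>k$, it concludes $\varphi_q(\sigma_*;m+\delta_k)\le 0$ and then lets $k\to\infty$ by continuity in $m$. You instead work at the \emph{starting point} $\eta$: you pass to the limit in the optimality identity $\varphi_q(\eta,m_n)=\E[\int_\eta^{\sigma_n}e^{-\beta u}q_u(X_u-\beta m_n)du\mid{\cal F}_\eta]$ via dominated convergence, deduce that $\sigma^*$ is optimal for $\varphi_q(\eta,m)$, and then invoke the equivalence (a)$\Leftrightarrow$(c) of Theorem~\ref{rts_Th1} to obtain $\varphi_q(\sigma^*,m)=0$. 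Your approach sidesteps the quasi-left-continuity of the Snell envelope and replaces it with the structural characterization of optimal stopping times; the paper's approach is slightly shorter once that regularity is in hand, but yours is more self-contained and makes the role of the closedness of ${\cal M}$ (needed for $\sigma^*\in{\cal M}_\eta$) explicit.
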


\begin{proof}
The monotonicity of $\sigma_\eta (m)$ follows from the fact that $$\varphi_q (\sigma_\eta (m_2);m_1)\leq
\varphi_q (\sigma_\eta (m_2);m_2)\leq0 \hbox{ for }m_1>m_2,$$ so that
%\begin{equation}\label{monotone}
 %\varphi_q (\sigma_\eta (m_2);m_1)=0 \hbox{ if }m_1>m_2.
%\end{equation}
%Therefore,
$\sigma_\eta (m_1)=\operatorname{essinf}\{\tau\in{\cal M}_\eta:\varphi_q (\tau;m_1)\leq0\}\leq
\sigma_\eta (m_2)$.
For the right-continuity of $\sigma_\eta (m)$ in $m$, consider a decreasing  sequence $
\delta_n\downarrow0$ of real numbers. By the monotonicity above, the sequence $\sigma_\eta (m+\delta_n)$ is a nondecreasing sequence dominated by $\sigma_\eta (m)$. Then there exists $\sigma_*\in {\cal M}_\eta$ such that $\sigma_*=\lim_{n\to\infty}\sigma_\eta (m+\delta_n)\leq\sigma_\eta (m)$. On the other hand, thanks to
the quasi-left-continuity of $\varphi_q$ (implied by that of $Z$, cf.  Theorem \ref{Th:2} (3)) and  the fact that $\varphi_q (\sigma_\eta (m+\delta_l);m+\delta_k)\leq0$   for any $l>k$, we see that $%
\varphi_q (\sigma_*;m+\delta_k)=\lim_{l\to
\infty}\varphi_q (\sigma_\eta (m+\delta_l);m+\delta_k)\leq0$. Hence,  the continuity of $\varphi_q (\sigma_*, m) $ in $m$ implies  that
$\varphi_q (\sigma_*;m)=\lim_{k\rightarrow\infty}\varphi_q (\sigma_*;m+\delta_k)\leq0$, which in turn implies $\sigma_*\geq \sigma_\eta (m)$. Consequently, $\sigma_*=\sigma_\eta (m)$, that is, $\lim_{n\to\infty}\sigma_\eta (m+\delta_n)=\sigma_\eta (m)$. 

This completes the proof.
\end{proof}

Thanks to this lemma, with a procedure similar to Remark \ref{rmk2.1},  we can work with the version of $\sigma_\eta (m)$ that is nonincreasing and right continuous in $m$ for every $\omega\in\Omega$, so that we can speak of its pathwise inverse
\begin{equation}\label{inverse of sigma}
\underline{M}^q_\eta (t)=\left\{\begin{array}{ll}
\sup\{m\geq 0:\sigma_{\eta} (m)>t\}, &t\geq\eta, \\
\infty, & 0\leq t<\eta\end{array}\right.
\end{equation}
and write particularly
\begin{equation}\label{mbp_eq03.14}
{M}^q_\eta=\underline{M}^q_\eta (\eta) \hbox{ and } \underline{M}^q (t)=\underline{M}^q_0 (t).
 \end{equation}
 The following lemma explains what  these quantities indicate and states that ${M}_\eta:={M}^1_\eta$  is  a direct extension of the classical Gittins index to the  setting with restricted stopping times.

 \begin{lemma}\label{lmgittins}
  Given  $\eta\in {\cal M}$, the following properties hold for the stochastic process $\{M_\eta^q (t)\}$: \hfill\\ (a). $\underline{M}^q_\eta (t)$ is ${\cal F}$-adapted.\\
 (b). ${M}^q_\eta=\inf\{m>0:\varphi_q (\eta, m)\leq0\}$.\\
 (c). $
\underline{M}^q_\eta (\rho)=\operatorname*{essinf}_{\tau\in \cal M_{\eta}, \tau\leq\rho} M_\tau^q
$ for $\rho\in {\cal M}_\eta$.\\
 (d). $
\beta M^q (\eta)=\operatorname{esssup}_{\tau>\eta, \tau\in{\cal M}}\frac{\E %
[\int_{\eta}^{\tau}e^{-\beta u}q_uX_{u}du|{\cal F}  _{\eta}]}{\E %
[\int_{\eta}^{\tau}e^{-\beta u}q_udu|{\cal F}  _{\eta}]}$   .
 \end{lemma}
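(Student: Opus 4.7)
The plan is to derive each of the four assertions from the pathwise definition $\underline{M}^q_\eta(t)=\sup\{m\geq 0:\sigma_\eta(m)>t\}$ in \eqref{inverse of sigma}, together with the characterization of $\sigma_\eta(m)$ in \eqref{sigma_0}, the monotonicity and right-continuity of $m\mapsto\sigma_\eta(m)$ furnished by Lemma \ref{rsmb_lem_1}, and the pathwise nonincreasing version of $\varphi_q(\eta,\cdot)$ supplied by Remark \ref{rmk2.1}. For (a), I will exploit the monotonicity of $\sigma_\eta(\cdot)$ to rewrite the level set as
\[
\{\underline{M}^q_\eta(t)>c\}=\bigcup_{r\in\mathbb{Q}\cap(c,\infty)}\{\sigma_\eta(r)>t\};
\]
each event in this countable union lies in $\mathcal{F}_t$ since $\sigma_\eta(r)$ is an $\mathcal{F}$-stopping time, so $\underline{M}^q_\eta(t)$ is $\mathcal{F}_t$-measurable. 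For (b), the heart of the argument is the identity $\{\sigma_\eta(m)>\eta\}=\{\varphi_q(\eta;m)>0\}$: the inclusion ``$\subseteq$'' is immediate because the second form of \eqref{sigma_0} forces $\sigma_\eta(m)=\eta$ as soon as $\varphi_q(\eta;m)\leq 0$, while the reverse inclusion uses the right-continuity of $\varphi_q(\cdot;m)$ inherited from the quasi-left-continuity of $Z_q$ (Theorem \ref{Th:2}(3)). Taking $\sup_m$ and using the pathwise nonincreasing version of $\varphi_q(\eta,\cdot)$, the set $\{m:\varphi_q(\eta;m)>0\}$ is an interval $[0,m^*)$ whose right endpoint is $m^*=\inf\{m:\varphi_q(\eta;m)\leq 0\}$, giving (b).

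For (c), I would chain the equivalences $m<\underline{M}^q_\eta(\rho)\Leftrightarrow\sigma_\eta(m)>\rho\Leftrightarrow\varphi_q(\tau;m)>0$ for every $\tau\in\mathcal{M}_\eta$ with $\tau\leq\rho\Leftrightarrow M^q_\tau>m$ for every such $\tau$ (by (b)) $\Leftrightarrow m<\operatorname{essinf}_{\tau\in\mathcal{M}_\eta,\,\tau\leq\rho}M^q_\tau$, and then take the supremum over $m$. For (d), I would split the integrand in \eqref{rsmb_Gittins cont1} as $X_u-\beta m$, so that $\varphi_q(\eta;m)\leq 0$ is equivalent to
\[
\beta m\,\E\!\left[\left.\int_\eta^\tau e^{-\beta u}q_u\,du\right|\mathcal{F}_\eta\right]\geq \E\!\left[\left.\int_\eta^\tau e^{-\beta u}q_uX_u\,du\right|\mathcal{F}_\eta\right]
\]
for every $\tau\in\mathcal{M}_\eta$ with $\tau>\eta$; after dividing by the strictly positive denominator and taking the esssup over $\tau$, combining with (b) yields (d).

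The main obstacle is the middle equivalence inside (c): since the family $\{\tau\in\mathcal{M}_\eta:\tau\leq\rho\}$ is uncountable, passing between the pathwise sup/inf defining $\sigma_\eta(m)$ and the essinf defining $\operatorname{essinf}_\tau M^q_\tau$ needs care. The standard remedy, paralleling the technique used in the proof of Lemma \ref{rsmb_lem_1}, is to observe that the relevant stopping-time families are lattice-directed, so the essinf is attained as the monotone limit of a countable minimizing sequence, and then to invoke the quasi-left-continuity of $\varphi_q(\cdot;m)$ (Theorem \ref{Th:2}(3)) to pass to the limit while preserving the strict sign of $\varphi_q$.
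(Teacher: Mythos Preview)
Your proposal is correct and closely parallels the paper's proof for parts (a), (b), and (d). For (a), the paper uses the single-$m$ identity $\{\underline{M}^q_\eta(t)>m\}=\{\eta>t\}\cup\{\sigma_\eta(m)>t,\ \eta\leq t\}$ (which, since $\sigma_\eta(m)\geq\eta$, is just $\{\sigma_\eta(m)>t\}$) rather than your countable union over rationals, but both arguments rest on the same right-continuity of $\sigma_\eta(\cdot)$ from Lemma~\ref{rsmb_lem_1}. For (b) and (d) the arguments are essentially identical to yours; incidentally, your appeal to right-continuity for the reverse inclusion in (b) is not needed, because Theorem~\ref{Th:2}(2) guarantees the essinf defining $\sigma_\eta(m)$ is actually attained, so $\sigma_\eta(m)=\eta$ forces $\varphi_q(\eta;m)=0$ directly.

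The one substantive difference is in (c). The paper avoids the uncountability obstacle you flag by working \emph{pathwise} rather than at the level of stopping times: it expands $\{\sigma_\eta(m)>t\}$ as the pathwise condition that $V_q(u,m)>\beta m\,\E[\int_u^\infty e^{-\beta(s-u)}q_s\,ds\mid\mathcal{F}_u]$ for every $u\in[\eta(\omega),t]\cap\mathcal{M}(\omega)$ (this is the second equality in their display \eqref{mbp_eqA3.13}, justified by the pathwise-minimum form of $\sigma_\eta(m)$ in Theorem~\ref{Th:2}(2)), recognizes this as $M^q_u>m$ for all such $u$, and immediately reads off the pathwise identity $\underline{M}^q_\eta(t)=\inf_{\eta\leq u\leq t,\,(u,\omega)\in\mathcal{M}}M^q_u$. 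The passage to the essinf over stopping times $\tau\in\mathcal{M}_\eta$, $\tau\leq\rho$, is then a one-line re-expression. Your stopping-time chain together with the lattice/quasi-left-continuity remedy would also go through, but the paper's pathwise route is shorter and sidesteps the difficulty entirely.
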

 \begin{proof}
 (a). For any finite $m>0$ and $t\in[0, \infty)$, if follows that
\begin{align}\label{mbp_eqA3.13}
&\{\omega:\underline{M}^q_\eta (t)>m\}\nonumber\\=&\{\omega:\eta>t\}\cup\{\omega:\sigma_\eta (m)>t\hbox{ and }\eta\leq t \}\nonumber\\
=&\{\omega:\eta>t\}\cup\notag\\
&\left\{\omega: V_q (u, m)>\beta m\E\left[\left.\int_u^\infty e^{-\beta (s-u)}q_sds\right|{\cal F}_u \right]\hbox{ for all }
 (u, \omega)\in{ ([\eta, t]\times\omega)\cap {\cal M} (\omega)}\right\},
\end{align}
where the first equality is a straightforward result of definition \eqref{inverse of sigma} and the second  from equality \eqref{sigma_0}. Note that the first equality implies the adaptedness of $\{\underline{M}^q_\eta (t)\}$, i.e., $\underline{M}^q_\eta (t)\in{\cal F}_t$ for all $t\in\mathbb{R}_+$. This proves (a).

 (b). For $\eta\in{\cal M}$, it is clear that
 \begin{align}%\label{mbp_eq03.14}
{M}^q_\eta=&\inf\left\{m\geq0:V_q (\eta, m)=\beta m\E\left[\left.\int_\eta^\infty e^{-\beta (s-\eta)}q_sds\right|{\cal F}_\eta \right]\right\}
=\inf\{m>0:\varphi_q (\eta, m)\leq0\}.
 \end{align}
%Then, because  $\{m\geq0:V (\eta, m)=m\}=[M (\eta), \infty)$ (cf. the first equivalence in \eqref{mbp_eqA3.13} ), by (\ref{MBP_eq03.10}),
%\begin{equation}\label{mbp_eq03.15}
%{M} (\eta)=\inf\{m>0:V (\eta;m)=m\}=\inf\{m>0:\varphi (\eta, m)=0\}.
%\end{equation}
%That is, the definition of ${M}^q_\eta$ obtained from (\ref{mbp_eq03.14}) by taking $q_t\equiv1$ is equivalent to that in Definition \ref%
%{rsmb_def3.1}.
%Thus, taking $\rho=\eta$ in (\ref{mbp_eqA3.13}) leads to
%\begin{equation}\label{M_eta}
%m<{M} (\eta) \iff\sigma_\eta (m)>\eta \iff V (\eta, m)>m.
%\end{equation}

 (c). Note that,  by (\ref{mbp_eqA3.13}), for $t\geq \eta (\omega)$,
\begin{align*}
\underline{M}^q_\eta (t)>m \iff &V_q (u, m)>\beta m\E\left[\left.\int_\eta^\infty e^{-\beta (s-\eta)}q_sds\right|{\cal F}_\eta \right] \hbox{ for all }u\in [\eta, t]\cap {\cal M} (\omega)\\
\iff &M_u^q>m\hbox{ for all }u\in [\eta, t]\cap {\cal M} (\omega).
\end{align*}
 That is,
$
 \underline{M}^q_\eta (t)=\inf_{\eta\leq u\leq t, (u, \omega)\in {\cal M}}M_u^q.
 $
 Re-expressing this in terms of stopping times leads to the desired equality
$
\underline{M}^q_\eta (\rho)=\operatorname*{essinf}_{\tau\in {\cal M}_{\eta}, {\tau\leq \rho}} M_\tau^q
$ for $\eta\in {\cal M}$ and  $\rho\in {\cal M}_\eta$.

 (d). It is obvious that $V_q (\eta;m)=V^+_q (\eta;m)\bigvee \beta m\E \left[ \left.\int_\eta^{\infty} e^{-\beta (u-\eta)}q_u
du\right|{\cal F}  _\eta\right]$  for $\eta\in {\cal M}$, where
\begin{equation}
V^+_q (\eta, m)=\operatorname{esssup}_{\tau\in{\cal M}, \tau>\eta}\E \bigg[
\left.\int_\eta^\tau\text e^{-\beta (u-\eta)}q_uX_{u}du+\beta m\int_\tau^\infty e^{-\beta (u-\eta)}q_udu%
\right|{\cal F}  _\eta\bigg].
\end{equation}
The assertion in (d) thus follows from the equivalence
\begin{align*}
V_q (\eta;m)\leq\beta m\E \left[ \left.\int_\eta^{\infty} e^{-\beta (u-\eta)}q_u
du\right|{\cal F}  _\eta\right]
 \iff& V^+_q (\eta;m)\leq\beta m\E \left[ \left.\int_\eta^{\infty} e^{-\beta (u-\eta)}q_u
du\right|{\cal F}  _\eta\right]\\
 \iff &\beta m
\geq \operatorname{esssup}_{\tau>\eta, \tau\in{\cal M}}\frac{\E %
[\int_{\eta}^{\tau}e^{-\beta u}q_uX_{u}du|{\cal F}  _{\eta}]}{\E %
[\int_{\eta}^{\tau}e^{-\beta u}q_udu|{\cal F}  _{\eta}]}.
\end{align*}
%Consequently,
%\[
%\beta M^q (\eta)=\operatorname{esssup}_{\tau\in{\cal M}_{\eta+}}\frac{\E %
%[\int_{\eta}^{\tau}e^{-\beta t}q_uX_{u}du|{\cal F}  _{\eta}]}{\E %
%[\int_{\eta}^{\tau}e^{-\beta t}q_udu|{\cal F}  _{\eta}]}, \]
%That is, the definition of ${M}_\eta$ obtained from (\ref{mbp_eq03.14}) by taking $q_t\equiv1$ is equivalent to that in Definition \ref{rsmb_def3.1}.
The proof is thus completed.
\end{proof}

The following lemma establishes a crucial expression for $\E \left[\int_\eta^{\infty}e^{-\beta t}q_tX_{t}dt\right]$ by means of the right derivative of $V_q (\eta, m)$ with respect to $m$.
\begin{lemma}
\label{rsmb_th_2} For any stopping time $\eta\in{\cal M}$,  $ V_q (\eta;m)$
is increasing in $m$ with right-hand derivative
\begin{equation}\label{derivative}
\frac{\partial^+V_q (\eta;m)}{\partial m}%
=\beta \E \left[\left.\int_{\sigma_\eta (m)}^\infty e^{-\beta (u-\eta)}q_udu\right|{\cal F}  _\eta
\right].
%\E \big[e^{-\beta (\sigma_\eta (m)-\eta)}|{\cal F}  _\eta\big]
\end{equation}
As a result,
%for $\eta\in {\cal M}$,
\begin{align}\label{representation}
\E \left[\int_\eta^{\infty}e^{-\beta u}q_uX_{u}du\right]=&\beta\E \left[\left. \int_\eta^{\infty} e^{-\beta u}q_u
\underline{M}^q_\eta (u)du\right|{\cal F}  _\eta\right].
%\hbox{,  and } \nonumber\\
 %\E \left[\int_0^{\infty}e^{-\beta u}q_uX_{u}du\right]=&\beta\E \left[%
%\int_0^{\infty} e^{-\beta u}q_u\underline{M}^q (u) du\right].
\end{align}
% where $\underline{M}^q_t =\underline{M}^q_0 (t).$
\end{lemma}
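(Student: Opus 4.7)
The plan is to establish the derivative formula (\ref{derivative}) first, by a two-sided sandwich on $V_q(\eta,\cdot)$, and then to deduce the integral representation (\ref{representation}) by integrating that derivative in $m$ and swapping the order of integration via Fubini.

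For $m' > m$, feeding the feasible stopping time $\sigma_\eta(m) \in {\cal M}_\eta$ into the definition (\ref{definition_V}) at parameter $m'$ will give the lower bound $V_q(\eta, m') - V_q(\eta, m) \geq \beta(m'-m)\E[\int_{\sigma_\eta(m)}^\infty e^{-\beta(u-\eta)}q_u du|{\cal F}_\eta]$, since $\sigma_\eta(m)$ attains the essential supremum at parameter $m$ by (\ref{sigma_0}). Symmetrically, evaluating (\ref{definition_V}) at parameter $m$ with the suboptimal choice $\sigma_\eta(m')$ furnishes the matching upper bound $V_q(\eta, m') - V_q(\eta, m) \leq \beta(m'-m)\E[\int_{\sigma_\eta(m')}^\infty e^{-\beta(u-\eta)}q_u du|{\cal F}_\eta]$. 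The monotonicity of $V_q(\eta;m)$ drops out of the first inequality. Dividing by $m'-m > 0$ and letting $m' \downarrow m$, the right-continuity of $\sigma_\eta(\cdot)$ from Lemma \ref{rsmb_lem_1}, together with pathwise dominated convergence (backed by the boundedness of $q$), collapses the two bounds to the common right-hand derivative asserted in (\ref{derivative}).

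For the representation (\ref{representation}), I would combine (\ref{MBP_eq03.10}) with (\ref{derivative}) to obtain $\partial^+\varphi_q(\eta,m)/\partial m = -\beta\E[\int_\eta^{\sigma_\eta(m)} e^{-\beta u}q_u du|{\cal F}_\eta]$, and record the two boundary values $\varphi_q(\eta,0) = \E[\int_\eta^\infty e^{-\beta u}q_u X_u du|{\cal F}_\eta]$ (take $\tau=\infty$ in (\ref{rsmb_Gittins cont1}), which is optimal by non-negativity of $X$) and $\lim_{m\to\infty}\varphi_q(\eta,m)=0$ (since $\varphi_q(\eta,m)=0$ once $m > M^q_\eta$, by Lemma \ref{lmgittins}(b)). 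Because $V_q(\eta,\cdot)$ is a supremum of affine functions and hence convex, so is $\varphi_q(\eta,\cdot)$; the fundamental theorem of calculus for convex functions then yields $\E[\int_\eta^\infty e^{-\beta u}q_u X_u du|{\cal F}_\eta] = -\int_0^\infty [\partial^+\varphi_q(\eta,m)/\partial m]\,dm = \beta\int_0^\infty \E[\int_\eta^{\sigma_\eta(m)} e^{-\beta u}q_u du|{\cal F}_\eta]\,dm$. A Fubini exchange of the $dm$ and $du$ integrations (legitimate by non-negativity), followed by the pointwise identity $\int_0^\infty \mathbf{1}_{\{u<\sigma_\eta(m)\}}\,dm = \underline{M}^q_\eta(u)$ -- the direct content of definition (\ref{inverse of sigma}) -- then delivers (\ref{representation}).

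The principal technical obstacle I expect is the simultaneous justification for pulling the conditional expectation through the $dm$-integral and for the pathwise dominated convergence underlying the sandwich limit; both rest on the standing integrability hypothesis $\E[\int_0^\infty e^{-\beta u}X_u du] < \infty$ from Section \ref{ModelSpecificationSection} together with the boundedness and right-continuity of $q$. Everything else reduces to linear-envelope arithmetic and the pathwise inverse relationship between $\sigma_\eta(\cdot)$ and $\underline{M}^q_\eta(\cdot)$ encoded in (\ref{inverse of sigma}).
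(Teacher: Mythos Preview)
Your proposal is correct and follows essentially the same architecture as the paper: a two-sided sandwich on the increments of $V_q(\eta;\cdot)$ yielding the derivative formula, followed by integration in $m$ and a Fubini swap against the inverse relation between $\sigma_\eta(\cdot)$ and $\underline{M}^q_\eta(\cdot)$.

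The only noteworthy difference is in how the sandwich is obtained. The paper derives the upper bound $V_q(\eta;m+\delta)-V_q(\eta;m)\le \beta\delta\,\E[\int_{\sigma_\eta(m+\delta)}^\infty e^{-\beta(u-\eta)}q_u\,du\mid{\cal F}_\eta]$ from the martingale property of $Z_q(\cdot;m)$ up to $\sigma_\eta(m+\delta)\le\sigma_\eta(m)$ (property~(3) preceding Remark~\ref{rmk2.1}), and the lower bound from the supermartingale property of $Z_q(\cdot;m+\delta)$. You instead obtain both bounds directly from the envelope definition~(\ref{definition_V}) by substituting the ``wrong'' optimizer---$\sigma_\eta(m)$ at parameter $m'$ and $\sigma_\eta(m')$ at parameter $m$---and subtracting. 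This is a cleaner envelope-theorem argument that bypasses the $Z_q$ machinery, though of course the optimality of $\sigma_\eta(\cdot)$ you invoke is itself equivalent to those martingale statements. For the integration step, the paper works with $V_q$ on the interval $[0,M^q_\eta]$ and uses $V_q(\eta;M^q_\eta)=\beta M^q_\eta\,\E[\int_\eta^\infty e^{-\beta(u-\eta)}q_u\,du\mid{\cal F}_\eta]$, whereas you work with $\varphi_q$ on $[0,\infty)$ and use $\varphi_q(\eta,m)=0$ for $m>M^q_\eta$; via~(\ref{MBP_eq03.10}) these are the same computation up to an affine shift.
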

\begin{proof}
The monotonicity of $V_q (\eta, m)$ in $m$ is straightforward and we first examine  equality \eqref{derivative}.
%
%Introduce a new gain process $H_t=G_{t+}$ and companion filtration ${\cal A}_t={\cal F}_{t+}$, where $t+=\operatorname{essinf}\{\tau>t: \tau\in{\cal M}\}$. Write $\tilde V_q (\eta, m)=\operatorname{esssup}_{\tau\in {\cal M}_\eta}\E\left[\left. H_\tau\right|{\cal A}_\eta\right]$. Then $V_q (\eta, m)=\E[\tilde V_q (\eta, m)|{\cal F}_\eta]$.
For $\delta>0$, Theorem \ref{rts_Th1} (b) and Lemma \ref{rsmb_lem_1} simply state that $Z_q (\eta;m)=\E \big[Z_q (\sigma_\eta (m+\delta);m)|{\cal F}  _\eta\big]$, so that
\begin{align*}
 V_q (\eta;m)=&\E \left[\left.\int_\eta^{%
\sigma_\eta (m+\delta)}e^{-\beta (u-\eta)}q_uX_{u}du+
e^{-\beta (\sigma_\eta (m+\delta)-\eta)}V_q (\sigma_\eta (m+\delta);m)\right|{\cal F}  _\eta\right]\\
\geq&\E \left[\left.\int_\eta^{%
\sigma_\eta (m+\delta)}e^{-\beta (u-\eta)}q_uX_{u}du+\beta m\int_{%
\sigma_\eta (m+\delta)}^\infty e^{-\beta (u-\eta)}q_udu\right|{\cal F}_\eta\right]\\
=&V_q (\eta;m+\delta)-\beta \delta\E \left[\left.\int_{\sigma_\eta (m+\delta)}^\infty e^{-\beta (u-\eta)}q_udu\right|{\cal F}  _\eta\right].
 \end{align*}
Consequently,
\begin{equation}\label{mbp_eq03.11}
V_q (\eta;m+\delta)-V_q (\eta;m) < \beta \delta\E \left[\left.\int_{\sigma_\eta (m+\delta)}^\infty e^{-\beta (u-\eta)}q_udu\right|{\cal F}  _\eta\right].
\end{equation}
 On the other hand, the relationship $$Z_q (\eta;m+\delta)=\E \big[Z_q (\sigma_\eta (m+\delta);m+\delta)|{\cal F}  _\eta]\geq\E \big[Z_q (\sigma_\eta (m);m+\delta)|{\cal F}  _\eta], $$ which is obtained from the supermartingale property of $Z_q (t;m+\delta)$, implies that
  \begin{align*}
  V_q (\eta;m+\delta)\geq &\mathrm{%
E}\left[\left.\int_\eta^{ \sigma_%
\eta (m)}e^{-\beta (u-\eta)}X_{u}du+
e^{-\beta (\sigma_\eta (m)-\eta)}V_q (\sigma_\eta (m);m+\delta)\right|{\cal F}  _\eta\right]\nonumber\\
\geq&V_q (\eta;m)+\beta \delta\E \left[\int_{ \sigma_\eta (m)}^\infty e^{-\beta (u-\eta)}q_udu|{\cal F}  _\eta
\right].
\end{align*}
Hence,
\begin{equation}\label{mbp_eq03.12}
V_q (\eta;m+\delta)-V_q (\eta;m)\geq \beta \delta\E \left[\int_{ \sigma_\eta (m)}^\infty e^{-\beta (u-\eta)}q_udu|{\cal F}  _\eta
\right].
\end{equation}
Combining (\ref{mbp_eq03.11}) with (\ref{mbp_eq03.12}) and letting $\delta\rightarrow0+$ lead to  the desired equality \eqref{derivative}.
%\begin{equation} \frac{\partial^+V_q (\eta;m)}{\partial m}%
%=\beta \E \left[\left.\int_{ \sigma_\eta (m)}^\infty e^{-\beta (u-\eta)}q_udu\right|{\cal F}  _\eta
%\right].
%%\E \big[e^{-\beta (\sigma_\eta (m)-\eta)}|{\cal F}  _\eta\big]
%\end{equation}

\bigskip
By \eqref{derivative} and the equality $V_q (\eta;M^q_\eta)-V_q (\eta;0)
=\int_0^{M^q_\eta}\frac{\partial^+V (\eta;m )}{\partial m }dm$, it follows that
\begin{align*}
V_q (\eta;M^q_\eta)-V_q (\eta;0)
%&=\int_0^{M^q_\eta}\frac{\partial^+V (\eta;m )}{\partial m }dm  \\
&=\beta\int_0^{M^q_\eta}  \E \left[\left.\int_{\sigma_\eta (m)}^\infty e^{-\beta (u-\eta)}q_udu\right|{\cal F}  _\eta
\right]dm  \\
&=M^q_\eta\beta\E \left[\left.\int_\eta^{\infty}e^{-\beta (u-\eta)}q_udu\right|{\cal F}_\eta\right]-\beta\E \left[\left.\int_0^{M^q_\eta}\int_\eta^{\sigma_\eta (m)}e^{-\beta (u-\eta)}q_ududm\right|{\cal F}_\eta\right].
\end{align*}
Noting that $V_q (\eta;M^q_\eta)=\beta M^q_\eta\E \left[\left.\int_\eta^{\infty}e^{-\beta (u-\eta)}q_udu\right|{\cal F}_\eta\right]$, it is immediate that
\begin{equation}\label{V_q}
V_q (\eta;0)=\beta\E \left[\left.\int_0^{M^q_\eta}\int_\eta^{\sigma_\eta (m)}e^{-\beta (u-\eta)}q_ududm\right|{\cal F}_\eta\right].
\end{equation}
 Due to the relationship
\[
\{ (m, u):\eta\leq u\leq\sigma_\eta (m), 0\leq m\leq M^q_\eta\}=\{ (m, u): 0\leq m\leq \underline{M}^q_\eta (u), \eta\leq u<\infty\},
\]
it follows by interchanging the integrations in \eqref{V_q} that
\[
V_q (\eta;0)=\beta\E \big[ \int_\eta^{\infty} e^{-\beta (u-\eta)}
q_u\underline{M}^q_\eta (u)du|{\cal F}  _\eta\big].
\]
Thus the desired equality in \eqref{representation} follows.% The second one in \eqref{representation} is obvious by taking $\eta=0$.
\end{proof}

%%%%%%%%%%%%%%%%%%%%%%%%%%%%%%%%%%%%%%%%%%%%%%%%%%%%%%%%%%%%%%%%%%%%%%%%%%%%%%%%%

We will need to treat the case where  one has an extra $\sigma$-algebra ${\cal G}^{\prime}$ that is independent of the filtration ${\cal F}$. This introduces a new
filtration ${\cal G}=\{{\cal G}_{t}\}$ by ${\cal G}_{t}={\cal F%
}_{t}\vee{\cal G}^{\prime}$,  generally called an \textit{initial
enlargement} (or \textit{augmentation}) of ${\cal F}  $ by ${\cal G}%
^{\prime}$. Denote the set of all ${\cal G}$-stopping times taking values a.s. in $\cal M$ by $\cal M^G$ and those taking values in $\cal M$ and larger than or equal to $\tau$ by ${\cal M}^{\cal G}_\tau$.
Consider the setting in which
\begin{enumerate}
\item $X_t$ is $\cal F$-adapted and
\item $q_t$ is ${\cal G}$-adapted, almost surely right continuous, and right decreasing at such time $t$ with $ (t, w)\in{\cal M}$.
\end{enumerate}
%For any $\eta\in{\cal M}$, denote by $M_\eta$ the Gittins index of $X$ with $q_t\equiv1$ and with respect to the filtration $\cal F$ at $\eta$.
%Similarly, for  ${\cal G}$-stopping time $\tilde\eta$,  introduce
%\begin{align}
%\varphi^q ({\tilde\eta}, m)&=\operatorname{esssup}_{{\tilde{\rho}}\in {\cal M}_{%
%\tilde{\eta}}}\E \left[\int_{\tilde{\eta}}^{\tilde{\rho}}e^{-\beta
%t}q_u [X_{u}-\beta m]du\big|{\cal G}_{\tilde{\eta}}\right],  \notag \\
%M^q (\tilde{\eta})&=\operatorname{inf}\{m: \varphi^q (\tilde{\eta}, m)\leq0\} \hbox{ and }
%\underline{M}^q_t =\operatorname{essinf}_{\tilde{\eta}\in {\cal M}_{[0, t]}}
%M^q (\tilde{\eta})  \notag
%\end{align}
%where ${\tilde{\rho}}\in {\cal M}_{\tilde{\eta}}$ means ${\tilde{\rho}}\in%
%{\cal \tilde{T}}$ and ${\tilde{\rho}}\in {\cal M}_{\tilde{\eta}}$. The
%same argument leads the following conclusion
%\begin{equation}
%\E \big[\int_0^{\infty}e^{-\beta t}q_t X_{t}dt\big]=\E \big[%
%\int_0^{\infty}\beta e^{-\beta t}q_t \underline{M}^q_t dt\big].
%\end{equation}
%%%%%%%%%%%%%%%%%%%%%%%%%%%%%%%%%%%%%%%%%%%%%%%%%%%%%%%%%%5
%Recall that $Z (\tau, m)=\operatorname{esssup}_{\nu\in{\cal M}_{\tau}}E[G (\nu, m)|%
%{\cal F}  _{\tau}]$, where $\nu\in{\cal M}_{\tau}$ means arbitrary $ (%
%{\cal F}  _t)$-stopping time $\nu$ evaluated in ${\cal M}_{\tau}$. The
%optimal stopping times for $Z (\tau, m)$ states that
%\begin{equation}
%Z (\tau, m)=\E [G (\sigma_\tau (m) , m)|{\cal F}  _{\tau}]\quad\text{%
%where}\quad\sigma_\tau (m)  =\operatorname{essinf} \{\nu\in{\cal M}%
%_{\tau}:Z (\nu, m)=G (\nu, m)\}.  \label{rsmb_a_6}
%\end{equation}
Under the augmented filtration $\cal G$, taking the right continuous version of $Z (t, m)$, we can extend the notation $Z (\tau, m)$ to  any $\cal G$-stopping times $\tau$ by $Z (\tau, m)=Z (\tau (\omega), m)$. Define a new optimization problem $\tilde{Z} (\tau, m)=\operatorname{esssup}%
_{\nu\in{\cal M}_{\tau}^{\cal G}}E[G (\nu, m)|{\cal G}_{\tau}]$.
%, where the essential supremum is taken over all ${\cal G}$-stopping times evaluated
%in ${\cal M}_{\tau}$ .
Then it is straightforward that $Z (\tau, m)=\tilde{Z} (\tau, m)$ for any ${\cal G}  $-stopping time $\tau$, which states that,
regardless of the enlargement of the domain of stopping times by initially introducing extra information, the optimal stopping problem basically remains if the additionally obtained information is independent of
the original information filtration $\cal F$ and $X_t$ is $\cal F$-adapted.
The following lemma holds for any $\cal G$-adapted, right continuous $q_u$ that is right decreasing  only when $u\in\cal M$.
\begin{lemma}
\label{rsmb_lem_3} Let $\tilde{X}_t$ be an arbitrary ${\cal F}$-adapted process and $q_t$  be ${\cal G}$-adapted, right continuous, and right decreasing at time $t\in{\cal M}$. Then, for any $\cal F$-stopping times $\eta\in {\cal M}$, the inequality $\operatorname{esssup}_{\nu\in{\cal M}%
_{\eta}}E[\int_\eta^{\nu}e^{-\beta t}\tilde{X}_tdt|{\cal F}_{\eta}]\leq 0$ implies
$
\E \big[\int_\eta^{\infty}e^{-\beta t}q_t\tilde{X}_t dt|{\cal G}%
_{\eta}\big]\leq 0.
$

\end{lemma}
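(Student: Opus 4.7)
The plan is to decompose the nonincreasing weight $q$ into horizontal level sets via the layer-cake formula, so that each level corresponds to a stopping time to which the hypothesis (once extended to the enlarged filtration) applies directly. Since $q_t\geq 0$, one has $q_t=\int_0^\infty\mathbf{1}\{q_t>r\}\,dr$; using right continuity and monotonicity of $q$ to identify $\{t\geq\eta:q_t>r\}$ with the stochastic interval $[\eta,\nu_r)$, where
\[
\nu_r:=\inf\{s\geq\eta:q_s\leq r\}\qquad(\inf\emptyset=\infty),
\]
Fubini produces the pathwise identity
\[
\int_\eta^\infty e^{-\beta t}q_t\tilde{X}_t\,dt=\int_0^\infty\!\!\left(\int_\eta^{\nu_r}e^{-\beta t}\tilde{X}_t\,dt\right)dr.
\]
Boundedness of $q$ and integrability of $\int_\eta^\infty e^{-\beta t}|\tilde X_t|\,dt$ make all these interchanges routine.

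Next, I would verify the stopping-time properties of $\nu_r$. The identity $\{\nu_r\leq t\}=\{\eta\leq t\}\cap\{q_t\leq r\}\in{\cal G}_t$ makes $\nu_r$ a ${\cal G}$-stopping time, and the standing assumption that $q$ is right-decreasing only at times $t$ with $(t,\omega)\in{\cal M}$ (together with the closedness of ${\cal M}(\omega)$ and $\infty\in{\cal M}$) forces the first level-crossing of $q$ to occur in ${\cal M}$, so $\nu_r\in{\cal M}^{\cal G}_\eta$. Taking $\E[\cdot\,|\,{\cal G}_\eta]$ of the layer-cake identity and passing the $dr$-integral through the expectation by conditional Fubini, the lemma reduces to showing that for every fixed $r\geq 0$,
\[
\E\left[\left.\int_\eta^{\nu_r}e^{-\beta t}\tilde{X}_t\,dt\,\right|\,{\cal G}_\eta\right]\leq 0\quad\text{a.s.}
\]

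The main obstacle is that $\nu_r$ is only a ${\cal G}$-stopping time, whereas the hypothesis delivers the analogous bound solely for ${\cal F}$-stopping times. This gap is closed by precisely the mechanism highlighted in the remark preceding the lemma: because $\tilde X$ is ${\cal F}$-adapted and ${\cal G}'$ is independent of ${\cal F}_\infty$, enlarging the filtration from ${\cal F}$ to ${\cal G}$ leaves the value of an optimal stopping problem built on ${\cal F}$-adapted rewards unchanged. Concretely, writing $\Omega=\Omega_1\times\Omega'$ to witness the independence, for almost every $\omega'\in\Omega'$ the section $\nu_r(\cdot,\omega')$ is an ${\cal F}$-stopping time in ${\cal M}_\eta$; the hypothesis therefore yields
\[
\E\left[\left.\int_\eta^{\nu_r(\cdot,\omega')}e^{-\beta t}\tilde{X}_t\,dt\,\right|\,{\cal F}_\eta\right](\omega_1)\leq 0\quad\text{for a.e. }\omega_1,
\]
and a final Fubini step recombines the $\omega'$-sections into the desired ${\cal G}_\eta$-conditional inequality. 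Integrating the resulting pointwise-in-$r$ bound over $r\geq 0$ then delivers the conclusion of the lemma.
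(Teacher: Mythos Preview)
Your proof is correct and follows essentially the same route as the paper's: both use the layer-cake representation $q_t=\int_0^{q_\eta}\mathbf{1}\{q_t>r\}\,dr$, identify the level sets with stochastic intervals ending at $\cal G$-stopping times in ${\cal M}^{\cal G}_\eta$ (the paper writes $q^{-1}(s)=\min\{u:q_u\le s\}$, your $\nu_r$), and then invoke the independence of ${\cal G}'$ from ${\cal F}_\infty$ to transfer the hypothesis from ${\cal M}_\eta$ to ${\cal M}^{\cal G}_\eta$. The only difference is presentational: the paper handles the enlargement step by appealing to the previously stated fact $Z(\tau,m)=\tilde Z(\tau,m)$, whereas you spell out a product-space sectioning argument; the latter is a reasonable heuristic for the same fact, though in full rigor one should note that independence does not literally give $\Omega=\Omega_1\times\Omega'$ and instead argue via conditional expectations or an isomorphic model.
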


\begin{proof}
%Note that $\zeta (u)-u$ is left continuous.
Introduce the right continuous inverse $q^{-1} (s)=\min\{u:q_u\leq s\}=\max\{u:q_u>s\}$. Then, for any $s$, $q^{-1} (s)$ is a $\cal G$-stopping time because $\{\omega: q^{-1} (s)\leq u\}=\{\omega: q_u \leq s\}\in{\cal G}_u$. In addition,
\begin{enumerate}
\item for any $t\geq \eta$, the relationship $s<q_t (\leq q_\eta)$ implies $q^{-1} (s)>\eta$  and
\item for any $\omega$,  $q^{-1} (s)\in {\cal M} (\omega)$ because $q_u$ is right decreasing only when $u\in {\cal M}$.
\end{enumerate} These two points actually further state that,  for any $s\in[0, 1)$, $q^{-1} (s)$ is a $\cal G$-stopping time  in $\cal M_\eta^G$. Note that $q_\eta\in {\cal G}_\eta$. Therefore,
\begin{align*}
\E \left[\int_\eta^{\infty}e^{-\beta t}\tilde{X}_tq_t dt\bigg|{\cal G}%
_{\eta}\right]=&\E \left[\int_\eta^{\infty}e^{-\beta t}\tilde{X}_t\int^{q_t}_0dsdt\bigg|{\cal G}%
_{\eta}\right]
=\int_0^{q_\eta}\E \left[\int_\eta^{q^{-1} (s)}e^{-\beta t}\tilde{X}_tdt\bigg|{\cal G}_{\eta}\right]ds
\leq 0,
\end{align*}
This completes the proof.
\end{proof}

With Lemma \ref{rsmb_lem_3}, setting $\tilde{X}_t=X_t-\beta m$ and replacing $q_u$ in  Lemma \ref{rsmb_lem_3} by  $\tilde q_u=I_{[\eta, \tau)}q_u$, it is immediate that $\varphi (\eta, m)\leq0$ implies  $\varphi_q (\eta, m)\leq0$. Consequently,
\begin{equation}\label{rsmb_cor_2}
M^q_\eta=\inf\{m>0:\varphi_q (\eta, m)\leq0\}\leq M_\eta=\inf\{m>0:\varphi (\eta, m)\leq0\} \hbox{ for all }\eta\in{\cal M}.
\end{equation}
%\begin{corollary}
%\label{rsmb_cor_2} $M^q (\tau)\leq M (\tau)$ a.s. for  $\tau\in {\cal M}$, where $q_t $ is defined in \eqref{rsmb_def_3}.
%\end{corollary}
%%%%%%%%%%%%%%%%%%%%%%%%%%%%%%%%%%%%%%%%%%%%%%%%%%%%%%%%55
Now let $T (t)$ be a generic component of a policy (i.e., $T^k (t)$ in the policy formulation with some $k\in\{1, 2, \dots, d\}$). We address the particular choice
\begin{equation}\label{q_u}
q_u=\exp[-\beta (\zeta (u)-u)],
\end{equation}
where
\begin{equation}\label{zeta}
\zeta (u)=\inf\{t:T (t)> u\}\ (\hbox{ hence, }T (t) <  u\iff \zeta (u) <  t)
\end{equation} is the right continuous inverse of $u=T (t)$, indicating the calendar time of the system at which the current arm, which has been operated for $u$ units of time, is to be selected for further operation, so that $\zeta (u)-u$ is the time spent on other arms and thus is also nondecreasing in $u$. Clearly, the particular $q_u$ in \eqref{q_u} is ${\cal G}$-adapted, right continuous, and right decreasing at time $u\in{\cal M}$. The following lemma gives an bound for the expected discounted reward from a single arm under any policy.
%
%$\zeta (u)$ is a $RCLL$
%strictly increasing process, it is trivial to see, from the technical requirement (4)
%of the restricted allocation policy, that $q_u $ is a $RCLL$ nonincreasing
%process with $q_u =q (D_{u}-) \text{ for }u\in {\cal B}$, $%
%\int_s^{t}e^{-\beta u}X_{u}q_u du=\int_s^{t}e^{-\beta u}X_{u}q_u du$ for $%
%t\geq s, s, t\in {\cal M} $.

\begin{lemma}\label{rsmb_eq_12}Let $T (t)$ be a generic component of a policy. Then
\begin{equation}
\E \left[\int_0^{\infty}e^{-\beta t}X_{T (t)}dT (t)\right]\leq\E \left[\int_0^{\infty}\beta e^{-\beta t}\underline{M} (T (t))dT (t)\right].
\end{equation}
\end{lemma}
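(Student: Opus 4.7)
The plan is to change variable from calendar time $t$ to the arm-operation time $u = T(t)$, which converts the left-hand side into $\E\bigl[\int_0^\infty e^{-\beta u} q_u X_u\,du\bigr]$, and then to read off the right-hand side from Lemma \ref{rsmb_th_2} together with the pointwise comparison $\underline{M}^q \leq \underline{M}$.

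First I would perform the substitution. Since $\zeta$ defined in \eqref{zeta} is the right-continuous inverse of $T$, for any nonnegative Borel $f$ one has
$$\int_0^\infty e^{-\beta t} f(T(t))\, dT(t) = \int_0^\infty e^{-\beta \zeta(u)} f(u)\, du,$$
and the factorization $e^{-\beta\zeta(u)} = e^{-\beta u} q_u$ rewrites the asserted inequality as
$$\E\left[\int_0^\infty e^{-\beta u} q_u X_u\, du\right] \le \beta\,\E\left[\int_0^\infty e^{-\beta u} q_u \underline{M}(u)\, du\right].$$
A convenient feature is that $q_u = 0$ on $\{\zeta(u) = \infty\}$, so contributions beyond the total operation time $T(\infty)$ (when it is finite) disappear automatically and no indicator is needed.

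Next, because $X_u, q_u \geq 0$ and $\infty \in \mathcal{M}_0$, the essential supremum in the definition \eqref{definition_V} of $V_q(0,0)$ is attained at $\tau = \infty$, which gives $V_q(0, 0) = \E\bigl[\int_0^\infty e^{-\beta u} q_u X_u\, du\bigr]$. The discussion preceding Lemma \ref{rsmb_lem_3} lets me work under the enlarged filtration $\mathcal{G}$ to which $q_u$ is adapted without altering $V_q$, since $X$ is $\mathcal{F}$-adapted and the enlargement is by an independent $\sigma$-algebra carrying the information about the other arms. Invoking Lemma \ref{rsmb_th_2} at $\eta = 0$ (so $\underline{M}^q_0 = \underline{M}^q$) produces
$$V_q(0, 0) = \beta\, \E\left[\int_0^\infty e^{-\beta u} q_u \underline{M}^q(u)\, du\right].$$

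Finally, \eqref{rsmb_cor_2} gives $M^q_\tau \leq M_\tau$ for every $\tau \in \mathcal{M}$, and Lemma \ref{lmgittins}(c) upgrades this to the pathwise bound $\underline{M}^q(u) \leq \underline{M}(u)$ by monotonicity of essinf. Plugging this into the previous display and undoing the substitution $u = T(t)$ yields the claim. The main technical point is the first step: I need the Stieltjes change of variables to be valid for $dT$ (which holds because $T$ is nondecreasing and absolutely continuous with respect to Lebesgue on the set where it increases, with $\zeta$ measurable), and the identification $e^{-\beta\zeta(u)} = e^{-\beta u} q_u$ to correctly absorb both the discount to calendar time $\zeta(u)$ and the vanishing beyond the range of $T$. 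Once these bookkeeping issues are settled, everything else is a direct application of the machinery already built in this section.
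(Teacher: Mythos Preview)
Your proposal is correct and follows essentially the same route as the paper: change variables via $\zeta$ to rewrite the left-hand side as $\E\bigl[\int_0^\infty e^{-\beta u}q_uX_u\,du\bigr]$, apply the representation \eqref{representation} from Lemma~\ref{rsmb_th_2} at $\eta=0$, invoke the comparison $\underline{M}^q\le\underline{M}$ from \eqref{rsmb_cor_2} together with Lemma~\ref{lmgittins}(c), and change variables back. Your write-up is slightly more explicit about the technical bookkeeping (the enlarged filtration, the identification of $V_q(0,0)$, the validity of the change of variables), but the argument is the same.
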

\begin{proof}
  First note that, by the definition $q$ in \eqref{q_u},
 \[
 \E \left[\int_0^{\infty}e^{-\beta t}X_{T (t)}dT (t)\right]=\E %
\left[\int_0^{\infty}e^{-\beta\zeta (t)}X_{t}dt\right]
=\E \left[\int_0^{\infty}e^{-\beta t}q_t X_{t}dt\right].
 \] Because $\underline{M} (t)$ and $\underline{M}^q (t)$ are both nonincreasing, $\underline{M}^q (t)\leq \underline{M} (t)$ follows from \eqref{rsmb_cor_2}. Therefore, an application of equality \eqref{representation} indicates that
\begin{align}
\E \left[\int_0^{\infty}e^{-\beta t}X_{T (t)}dT (t)\right]= \E \left[\int_0^{\infty}\beta e^{-\beta t}q_t \underline{M}^q (t)dt\right]
\leq& \E \left[\int_0^{\infty}\beta e^{-\beta t}q_t \underline{M} (t)dt\right].\nonumber
\end{align}
Using again the definition of $q$ leads to
\[
\E \left[\int_0^{\infty}e^{-\beta t}X_{T (t)}dT (t)\right]\leq\E \left[\int_0^{\infty}\beta \exp (-\beta\zeta (t))\underline{M} (t)dt\right]
\nonumber\\
=\E \left[\int_0^{\infty}\beta e^{-\beta t}\underline{M} (T (t))dT (t)\right].
\]
This proves the lemma.
%That is,
%\begin{equation}  \label{rsmb_eq_12}
%\E \big\{\int_0^{\infty}e^{-\beta t}X_{T (t)}dT (t)\big\}\leq \E %
%\big\{\int_0^{\infty}\beta e^{-\beta t}\underline{M}[T (t)]dT (t)\big\}.
%\end{equation}
\end{proof}

\section{Optimal Allocation of RMAB Processes}

\label{rsmb_proof_G_P}
\setcounter{equation}{0}
We are now ready to state and prove the results on the optimal policies for the RMAB processes. The
identifier $k$ of arms has to be added back.

We will see that the solution to this problem is
still the celebrated Gittins index policy,  with Gittins indices generalized as  follows.
Note that for $\eta\in{\cal M}^k$, the Gittins index is the same as $M_\eta^q$ introduced in the previous section (by \eqref{inverse of sigma} and \eqref{mbp_eq03.14}) with $q\equiv 1$, see especially Lemma \ref{lmgittins} (d), whereas for $\eta$ not in ${\cal M}^k$, the definition is new.
%Given arm $k$. For any ${\cal F}^k  $-stopping time $\eta$, define  stopping times
%\begin{equation}\label{eta_M}
%\underline\eta =\operatorname{esssup}\{\tau\in{\cal M}^k :\tau\leq  \eta \}\in{\cal M}^k \hbox{ and }
%\eta+=\operatorname{essinf}\{\tau\in{\cal M}^k :\tau>\eta \}\in{\cal M}^k
%\end{equation} so that $\xi>\eta\iff \xi>\underline\eta$ for any $\cal F$-stopping time $\xi\in{\cal M}^k $.

\begin{definition}
\label{rsmb_def3.1} The Gittins indices $\{M^k_\eta:\eta\hbox{ is a finite }{\cal F}^k\hbox{-stopping time}\}$ of arm $k$ are defined by two steps:\\
%\begin{enumerate}
Step 1. For   ${\cal F}^k$-stopping times $\eta\in{\cal M}^k$, compute
\begin{equation}
M^k_\eta=\operatorname{esssup}_{\tau>{\eta}, \tau\in{\cal M}^k}\frac{\E %
[\int_{\eta}^{\tau}e^{-\beta u}X^k_{u}du|{\cal F}^k _{\eta}]}{\beta\E %
[\int_{\eta}^{\tau}e^{-\beta u}du|{\cal F}^k  _{\eta}]},
%\varphi^k (\tau, m;\omega) (\hbox{abbreviated as  }\varphi^k (\tau, m)) =\operatorname{%
%esssup}_{\rho\in{\cal M}^{k}_{\tau}}\E \big[\int_{\tau}^{\rho}e^{-%
%\beta u} (X^k (u)-\beta m)du|{\cal F}  _\tau^k\big],
\end{equation}
where the essential supremum is taken over the set $\{\tau: \tau> {\eta}\hbox{ and }\tau\in {\cal M}^k\}$ of ${\cal F}  ^k$%
-stopping times.\\
Step 2. For other  ${\cal F}^k$-stopping times $\eta$, find $\underline\eta=esssup\{\tau\leq {\eta}:\tau\in {\cal M}^k\}$ ($\in{\cal M}^k$) and define  $M^k_\eta=
M^k_{\underline\eta}$.
\end{definition}

Since $M_\eta^k$ is defined for all stopping times $\eta$, we can construct an associated process $M^k=\{M_{t}^{k} (\omega):t<\infty,
\}$ to $M^k_\eta$ (called
Gittins
indices process) and all the processes $\{M^k, k=1, 2, \dots, d\}$ serve to select an arm to operate, as will be illustrated later on.
\begin{remark}
Here note that Definition \ref{rsmb_def3.1} unifies and extends all the classical definitions of Gittins indices in discrete time, continuous time and   general discrete time setting to the current RMAB process situation. For example, in the case ${\cal M}^k=\mathbb{N}\cup \{\infty\}$,  $ (M_n^{k})$ is a stochastic sequence of
Gittins indices in integer times, coinciding with what were defined by
Gittins and Jones (1974) and Gittins
 (1979).
\end{remark}
%Also introduce the lower envelope $\underline{M}^k$ of each $M^k$ by
%\begin{equation}
%\underline{M}^k_s (t) (\omega)=\inf_{u\in[s, t]%
%}M^k_u (\omega), \ t\geq 0.
%\end{equation}
%Of course, \textbf{ If ${\cal M}$ is a progressive set } $\underline{M}^k (s, t) (\omega)=\operatorname{essinf}_{\tau\in{\cal M%
%}_{[s, t]}}M^k_{\tau} (\omega), \ t\geq 0.$
%Simplify the notation $\underline{%
%M}^k_0 (t)$ as $\underline{M}^k_t$.
{With the lower envelope $\underline{M}^k (t)=\min_{0\leq u\leq t}M^k (u)$ (see  \eqref{mbp_eq03.14} and Lemma \ref{lmgittins} (c)), let ${\cal M}%
^k_1 (\omega)=\text{closure}\{t: (t, \omega)\in{\cal M}^k (\omega), M^k_t (\omega)=%
\underline{M}^k (t, \omega) \}$ and call the times in the complement of $%
{\cal M}^k_1 (\omega)$  excursion times of $M^k$ from its lower
envelope $\underline{M}^k$.} It is obvious that, for fixed $\omega$, the set of excursion times is a union of countably many open intervals. 

The next presents the definition of (Gittins) index policy.
\begin{definition}
A restricted policy $\hat{T}= (\hat{T}^1, \cdots, \hat{T}^d)$ is a
 (Gittins) index policy if, for each $k$, $\hat{T}^k=\{\hat{T}^k (t)\geq 0\}$
right increases at time $t\geq 0$ only when
\begin{equation}
M^{k}_{\hat{T}^k (t)}=\bigvee_{j=1}^{d}M^{j}_{\hat{T}^j (t)}.
\end{equation}
{and time must be allocated exclusively to a single arm over its excursion
interval without switching.}
\end{definition}

Because ${\cal M}_1^k\subset {\cal M}^k$, it is clear that an index policy satisfies the restrictions on policies. As observed by
Mandelbaum (1987),  index policies need not be unique. The solution to the RMAB process is stated in the theorem below.

\begin{theorem}
\label{rsmb_th_1} Any (restricted) index policy $\hat T= (\hat{T}^1, \cdots, \hat{T}^d)$ is optimal with
the optimal value expressed in terms of the lower envelopes of the
indices as
\begin{equation}\label{optimal}
V=\E \left[\int_0^{\infty}\beta e^{-\beta t}\bigvee_{k=1}^d
\underline{M}^{k} (\hat{T}^k (t))dt\right].
\end{equation}
\end{theorem}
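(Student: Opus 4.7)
The plan is to establish $v(T)\le v(\hat T)$ for every admissible $T$ by running a two-layer upper bound, and then to recognize $v(\hat T)$ as the right-hand side of \eqref{optimal}. Layer (i) is the arm-by-arm bound from Lemma \ref{rsmb_eq_12}, which controls $v(T)$ by the lower envelopes of the Gittins-index processes. Layer (ii) is a level-set decomposition of those envelopes using the retirement stopping times $\sigma^{k}_{0}(m)$ from \eqref{sigma_0} (with $q\equiv 1$, $\eta=0$), which turns the arm-by-arm bound into a policy-independent quantity that is attained by $\hat T$.

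For layer (i), summing Lemma \ref{rsmb_eq_12} over $k$ gives
$$v(T)\leq \sum_{k=1}^{d}\E\Big[\int_{0}^{\infty}\beta e^{-\beta t}\underline M^{k}(T^{k}(t))\,dT^{k}(t)\Big]=:B(T).$$
Writing $\underline M^{k}(u)=\int_{0}^{\infty}\mathbf{1}\{u<\sigma^{k}_{0}(m)\}\,dm$ and applying Fubini,
$$B(T)=\int_{0}^{\infty}\E\Big[\int_{0}^{\infty}\beta e^{-\beta t}\,dS_{m}(t)\Big]dm,\qquad S_{m}(t):=\sum_{k=1}^{d}\big(T^{k}(t)\wedge \sigma^{k}_{0}(m)\big).$$
Since $S_{m}$ is nondecreasing with $S_{m}(t)\leq t$ and $S_{m}(\infty)\leq \sum_{k}\sigma^{k}_{0}(m)$, the Laplace transform on the right is pointwise maximized when $dS_{m}(t)=dt$ on $[0,\sum_{k}\sigma^{k}_{0}(m)]$, giving
$$\int_{0}^{\infty}\beta e^{-\beta t}\,dS_{m}(t)\leq 1-\exp\Big(-\beta\sum_{k=1}^{d}\sigma^{k}_{0}(m)\Big).$$
Hence $v(T)\leq \int_{0}^{\infty}\E[1-e^{-\beta\sum_{k}\sigma^{k}_{0}(m)}]\,dm$, a bound independent of $T$.

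Next I would verify that $\hat T$ saturates both bounds. Saturation in the level-set bound is immediate from the definition of an index policy: as long as some arm has $M^{k}_{\hat T^{k}(t)}>m$, the policy operates an arm whose current index exceeds $m$, so $dS_{m}(t)=dt$ up to the first calendar time at which all arms have fallen to or below level $m$. Saturation in Lemma \ref{rsmb_eq_12} requires that the time-change weight $q^{k}_{u}=\exp[-\beta(\zeta^{k}(u)-u)]$ produced by $\hat T$, see \eqref{q_u}--\eqref{zeta}, satisfies $\underline M^{q^{k},k}(u)=\underline M^{k}(u)$, so that the inequality in \eqref{rsmb_cor_2} collapses to equality; the intuition is that $q^{k}$ is constant on the closure of $\{u:M^{k}_{u}=\underline M^{k}(u)\}$ (where arm $k$ is operated exclusively and $\zeta^{k}(u)-u$ is frozen) and right-decreasing only over the excursions of $M^{k}$ strictly above $\underline M^{k}$ (where the arm is not operated at all). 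An excursion-theoretic/martingale argument using Theorem \ref{rts_Th1}(b) at the endpoints of each excursion together with the representation \eqref{representation} should make this rigorous. Once both saturations are in hand, noting that under $\hat T$ only the currently maximal arm is operated gives $\sum_{k}\underline M^{k}(\hat T^{k}(t))\,d\hat T^{k}(t)=\bigvee_{k}\underline M^{k}(\hat T^{k}(t))\,dt$, which converts $B(\hat T)=v(\hat T)$ into the stated form \eqref{optimal}.

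The main obstacle will be the saturation of Lemma \ref{rsmb_eq_12} under $\hat T$, i.e.\ upgrading the excursion heuristic into a rigorous equality $\underline M^{q^{k},k}=\underline M^{k}$ along the trajectories of the index policy. This is presumably the role of the new technique advertised in the introduction (the discounted gain process \eqref{gain process} together with the unseen technical lemma), and is precisely what allows the proof to be much shorter than the continuous-time arguments of Kaspi--Mandelbaum and El Karoui--Karatzas.
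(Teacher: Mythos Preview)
Your two-layer upper bound is exactly the paper's route: layer (i) is Lemma \ref{rsmb_eq_12} summed over $k$, and your level-set decomposition in layer (ii) is the content of Lemma \ref{lemma_pf_Th_1} (your $\sigma_0^k(m)$ coincides with the paper's $h_m^k$, and your Laplace-transform maximization is their integration-by-parts bound $\sum_k T^k(t)\wedge h_u^k\le t\wedge\tilde h_u$). The saturation of the level-set bound under $\hat T$ is also the same argument, though the paper proves \eqref{E14_bandit_wu} carefully rather than by the heuristic you give.

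The genuine gap is your proposed saturation of Lemma \ref{rsmb_eq_12}, and here your intuition is stated \emph{backwards}. Under an index policy, arm $k$ is operated exclusively precisely on its excursion intervals (this is part of the definition), so $\zeta^k(u)-u$ is frozen and $q^k_u$ is \emph{constant on excursions}; the only places where arm $k$ can be put aside, hence where $q^k$ can right-decrease, are the points of $\mathcal{M}^k_1$ (where $M^k_u=\underline M^k(u)$). With the roles reversed, your heuristic no longer supports the claimed equality $\underline M^{q^k,k}=\underline M^k$, and in fact the paper does not attempt to prove that identity at all.

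Instead, the paper's Lemma \ref{lemma_pf_Th_2} shows $v(\hat T)=\underbar{\it v}(\hat T)$ by a direct excursion computation that never compares $\underline M^{q^k}$ with $\underline M^k$. Using $\zeta^k(u)=\zeta^k(g^k_u)+(u-g^k_u)$, where $g^k_u$ is the last point of $\mathcal{M}^k_1$ before $u$, it factors $e^{-\beta\zeta^k(u)}$ and rewrites $v(\hat T)$ as an integral over a level parameter $s$; for each $s$ the inner expectation is $\E[\int_0^{D^k_{H_k^{-1}(s)}}e^{-\beta u}X^k_u\,du]$ with $D^k_t$ the right endpoint of the current excursion. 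Applying the representation \eqref{representation} with $q\equiv 1$ at $\eta=D^k_{H_k^{-1}(s)}\in\mathcal{M}^k_1$, together with the key observation $\underline M^k_{D^k_t}(u)=\underline M^k(u)$ (equation \eqref{lowerenvelop_2}), converts this into the same expression with $X^k$ replaced by $\beta\underline M^k$, yielding $\underbar{\it v}(\hat T)$. So the ``new technique'' you anticipated is not a proof that $\underline M^{q^k,k}=\underline M^k$, but rather this restructuring that lets one invoke \eqref{representation} with $q\equiv 1$ at excursion endpoints.
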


\begin{proof}
Define $\tilde {{\cal F}  }%
_{t}^{k}={\cal F}  _{t}^{k}\bigvee_{j\neq k}{\cal F}  _{\infty }^{j}$. Fix
an arbitrary policy $T$ and let $\zeta^{k} (t)=\inf\{u:T^{k} (u)>t\}$ be the
generalized inverse of $T^{k} (t)$.
%Note that $T^{k} (t)$ is an increasing continuous process, $\zeta^{k} (t)$ is  an
%increasing process with RCLL paths [right-continuous, with limits from the left].
Define
\begin{align} 
v (T)=&\E \left[\sum_{k=1}^d\int_0^{\infty}e^{-\beta t}X^k_{T^k (t)}dT^k (t)%
\right]\hbox{ and }\notag\\ \underbar{\it v} (T)=&\E \left[\sum_{k=1}^d\int_0^{\infty}\beta e^{-\beta t}%
\underline{M}^{k} (T^k (t))dT^k (t)\right] \label{rsmb_eq_18}
\end{align}
to represent respectively the expected values of the original bandit process and a deteriorating bandit process with reward rates $\underline{M}^{k} (t), k=1, 2, \dots, d$ under the same policy $T$.
Note that Lemma \ref{rsmb_eq_12} simply states that
\[
\E \left[\int_0^{\infty}e^{-\beta t}X^k_{T^k (t)}dT^k (t)\right]\leq
\E \left[\int_0^{\infty}\beta e^{-\beta t}\underline{M}^{k} (T^k (t))dT^k (t)\right].
\]
%%%%%%%%%%%%%%%%%%%%%%%%%%%%%%%%%%%%%%%%%%%%%%%%%%%%%%%%%%%%%5
Summing it over all arms $k=1, 2, \dots, d$, it follows that under any policy $T$,
\[
v (T)\leq \underbar{\it v} (T).
\]
Thus, to prove the optimality of an index policy $\hat T$, it suffices to prove $\underbar{\it v} (T)\leq \underbar{\it v} (\hat T)=v (\hat T)$.
This is done by the following Lemmas \ref{lemma_pf_Th_1} and \ref{lemma_pf_Th_2}.
\end{proof}
%%%%%%%%%%%%%%%%%%%%%%%%%%%%%%%%%%%%%%%%%%%%%%%%%%%%%%%%%%%%%%%%
\begin{lemma}\label{lemma_pf_Th_1}
 For any policy $T$ and index policy $\hat T$, $\underbar{\it v} (T)\leq \underbar{\it v} (\hat T).$
\end{lemma}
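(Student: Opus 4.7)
The strategy is a pathwise layer-cake / rearrangement argument that reduces the comparison $\underbar{\it v}(T)\le\underbar{\it v}(\hat T)$ to an elementary one-dimensional statement at each level $m\ge 0$. I start by decomposing $\underline{M}^{k}(u)=\int_{0}^{\infty}\mathbf{1}\{\underline{M}^{k}(u)>m\}\,dm$ and swapping integrals (Fubini--Tonelli) to get
\begin{equation*}
\underbar{\it v}(T)=\int_{0}^{\infty}\E\!\left[\sum_{k=1}^{d}\int_{0}^{\infty}\beta e^{-\beta t}\mathbf{1}\{\underline{M}^{k}(T^{k}(t))>m\}\,dT^{k}(t)\right]dm.
\end{equation*}
For each $m\ge 0$, introduce the ${\cal F}^{k}$-stopping time $\sigma_{m}^{k}=\inf\{u\ge 0:\underline{M}^{k}(u)\le m\}$, the level-crossing time of the nonincreasing, right-continuous envelope $\underline{M}^{k}$. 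Since $\{u:\underline{M}^{k}(u)>m\}=[0,\sigma_{m}^{k})$, the inner sum becomes $\int_{0}^{\infty}\beta e^{-\beta t}\,\Phi_{m}^{T}(t)\,dt$ with $\Phi_{m}^{T}(t):=\sum_{k}\mathbf{1}\{T^{k}(t)<\sigma_{m}^{k}\}\,(dT^{k}/dt)(t)$.

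Next I would prove a universal pathwise upper bound on this integral. Two elementary facts do the job: (i) $\Phi_{m}^{T}(t)\in[0,1]$ because $\sum_{k}(dT^{k}/dt)(t)=1$; (ii) the change of variable $u=T^{k}(t)$ yields
\begin{equation*}
\int_{0}^{\infty}\Phi_{m}^{T}(t)\,dt=\sum_{k=1}^{d}\int_{0}^{T^{k}(\infty)}\mathbf{1}\{u<\sigma_{m}^{k}\}\,du\le\sum_{k=1}^{d}\sigma_{m}^{k}.
\end{equation*}
Because $t\mapsto\beta e^{-\beta t}$ is strictly decreasing, the layer-cake rearrangement inequality then delivers, pathwise,
\begin{equation*}
\int_{0}^{\infty}\beta e^{-\beta t}\,\Phi_{m}^{T}(t)\,dt\le 1-\exp\!\Bigl(-\beta\sum_{k=1}^{d}\sigma_{m}^{k}\Bigr),
\end{equation*}
with equality when $\Phi_{m}^{T}=\mathbf{1}_{[0,\sum_{k}\sigma_{m}^{k})}$.

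The decisive step is to verify that the index policy $\hat T$ achieves this upper bound almost surely for every $m$, that is, $\Phi_{m}^{\hat T}=\mathbf{1}_{[0,\sum_{k}\sigma_{m}^{k})}$. For this I would argue that under $\hat T$ the arm $\kappa(t)$ currently served attains the running maximum $\bigvee_{j}\underline{M}^{j}(\hat T^{j}(t))$: at instants outside every excursion of any $M^{j}$ above $\underline{M}^{j}$ one has $M^{j}=\underline{M}^{j}$ for every $j$, and the policy picks an arm with the largest $\underline{M}$; during an excursion of arm $k$ the no-switch clause forces continued service of $k$, the coordinates $\hat T^{j}$ for $j\ne k$ are frozen and $\underline{M}^{k}(\hat T^{k}(t))$ is itself constant on the excursion (the running minimum of $M^{k}$ does not change on it), so $\underline{M}^{k}(\hat T^{k}(t))=\bigvee_{j}\underline{M}^{j}(\hat T^{j}(t))$ throughout. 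The map $t\mapsto\bigvee_{j}\underline{M}^{j}(\hat T^{j}(t))$, being a maximum of nonincreasing processes, is itself nonincreasing; thus the ``good'' set $\{t:\bigvee_{j}\underline{M}^{j}(\hat T^{j}(t))>m\}$ is an initial interval $[0,\hat\tau_{m})$, and since no calendar time is spent on a ``bad'' arm while some arm is still good, $\hat\tau_{m}=\sum_{k}\sigma_{m}^{k}$. Integrating in $m$ and taking expectations then delivers $\underbar{\it v}(T)\le\underbar{\it v}(\hat T)$.

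The main obstacle will be this last verification: the index policy is defined through $M^{k}$ rather than $\underline{M}^{k}$, so one has to show that despite switches driven by the possibly non-monotone indices, the resulting allocation is the greedy rule for the deteriorating bandit with rates $\underline{M}^{k}$. The excursion-based no-switch clause in the definition of an index policy is precisely what makes this true, but converting it into the pathwise identity $\underline{M}^{\kappa(t)}(\hat T^{\kappa(t)}(t))=\bigvee_{j}\underline{M}^{j}(\hat T^{j}(t))$ demands careful bookkeeping of $\underline{M}^{k}(\hat T^{k}(t))$ across excursion intervals and switching instants.
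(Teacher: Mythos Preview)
Your proposal is correct and follows essentially the same layer-cake strategy as the paper: both decompose $\underline{M}^{k}$ into level sets, introduce the level-crossing times (your $\sigma_{m}^{k}$ is the paper's $h_{u}^{k}$), and reduce the problem to a one-dimensional comparison at each level. The technical execution differs slightly. The paper integrates by parts to convert $\int e^{-\beta t}\,d(T^{k}(t)\wedge h_{u}^{k})$ into $\beta\int e^{-\beta t}(T^{k}(t)\wedge h_{u}^{k})\,dt$ and then uses the elementary pointwise inequality $\sum_{k}T^{k}(t)\wedge h_{u}^{k}\le t\wedge\sum_{k}h_{u}^{k}$, whereas you work with the density $\Phi_{m}^{T}$ and invoke a rearrangement inequality against the decreasing weight $e^{-\beta t}$; these are two sides of the same coin. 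For the crucial equality case, the paper proves $\sum_{k}\hat{T}^{k}(t)\wedge h_{u}^{k}=t\wedge\tilde{h}_{u}$ by a short contradiction argument (assuming some arm has overshot $h_{u}^{p}$ while another has not reached $h_{u}^{k}$ and deriving a violation of the index rule), which is more direct and robust than your ``envelope-leader'' sketch and sidesteps the bookkeeping across excursion intervals that you correctly flag as the main obstacle. Your route has the advantage of making the intuition (the index policy is greedy for the deteriorating envelopes) explicit, while the paper's contradiction argument is shorter and handles ties and continuity issues with less effort.
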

\begin{proof}
 Under $T$, the total discounted reward $\underbar{\it v} (T)$ for the reward rates $\{\beta\underline{M}^k (t), k=1, 2, \dots, d\}$ is%
\begin{equation}
\underbar{\it v} (T)=\beta\sum\limits_{k=1}^{d}\E\big[\int_{0}^{\infty}e^{-\beta t}\underline{M}^{k} ({T^{k} (t)})d{T^{k} (t)}\big].
%=\sum\limits_{k=1}^{d}\int_{0}^{\infty}e^{-\beta t}\underline{M}^{k} ({T_{t}^{k}})f_{t}^{k}dt.
 \label{E15_bandit_wu}%
\end{equation}
%where the reward rate $\underline{M} ({t}^{k})$ is supposed to be right-continuous and
%$f^k (t)\geq0$ are the almost everywhere derivatives of $T_{t}^{k}, k=1, 2, \ldots d$
%such that $\sum_{k=1}^{d}f^k (t)=1$. Then
%$v (T)\leq v (\hat{T})$, where $T$ is arbitrary and $\hat{T}$ a policy
%following the leading reward rate. This is proved as follows.
Write $h_{u}^{k}=\sup\{t:\underline{M}^{k} ({t})>u\}=\inf\{t:\underline{M}^{k} ({t})\leq u\}$ for the
right-continuous inverse of $\underline{M}^{k} ({t})$, which models the time needed to
operate the arm such that its reward rate falls down to a level no more than
$u$. Because $\underline{M}^k (t)=\underline{M}^k (\underline t)$,  where $\underline t$ is defined as in Definition \ref{rsmb_def3.1}, it is clear that $h_u^k\in{\cal M}^k$. Thus, in order that all $\underline{M}^{k}$ can fall down to level $u$, one needs to spend in total  $\tilde{h}_{u}=\sum_{k=1}%
^{d}h_{u}^{k}$ units of time on the $d$ arms.

We first examine the equality
\begin{equation}
\sum_{k=1}^{d}\hat{T}^{k} (t)\wedge h_{u}^{k}=t\wedge\tilde{h}_{u}
\label{E14_bandit_wu}%
\end{equation}
over the set of  $t$ at which all $\underline{M}^{k} ({\hat{T}^{k} (t)})$, $k=1, 2, \ldots, d$,
are continuous.

Since $\sum_{k=1}^{d}\hat{T}^{k} (t)=t$ and
$\tilde{h}_{u}=\sum_{k=1}^{d}h_{u}^{k}$, it suffices to
show that there exists no pair
$ (k, p)$ of identifiers such that%
\begin{equation}
\hat{T}^{p} (t)>h_{u}^{p}\quad\text{and}\quad\hat{T}^{k} (t)<h_{u}^{k}
\label{wu_bandit_4}%
\end{equation}
if both $\underline{M}^{k} ({\hat{T}^k (t)})$ and
$\underline{M}^{p} ({\hat{T}^p (t)})$ are continuous at point $t$.
We prove it by contradiction.
If (\ref{wu_bandit_4}) were true, then $\underline{M}^{p} ({\hat{T}^p (t)})\leq
u<\underline{M}^{k} ({\hat{T}^k (t)})$. Define%
\[
\tau=\sup\{s:s<t, \underline{M}^{p} (\hat{T}^p (t))\geq \underline{M}^{k} ({\hat{T}^k (s)})%
\}=\inf\{s:s\leq t, \underline{M}^{p} ({\hat{T}^p (t)}) < \underline{M}^{k} (\hat{T}^k (s))\}
\]
with the convention $\sup\varnothing=0$. Because $\underline{M}^{p} ({\hat{T}^p (s)})%
 < \underline{M}^{k} ({\hat{T}^k (s)})$ for all $s\in (\tau, t]$, the feature of $\hat{T}$
following the leader indicates that
\begin{equation}
\hat{T}^p (t)=\hat{T}^p (\tau). \label{wu_bandit_3}%
\end{equation}
If $\tau=0$ then $\hat{T}^ (t)=0\leq
h_{u}^{p}$, contradicting (\ref{wu_bandit_4}). Otherwise, i.e., $\tau>0$, find a sequence of nonnegative sequence $\alpha
_{n}\rightarrow0$ such that $\underline{M}^{p} ({\hat{T}^p (\tau-\alpha_{n})})\geq
\underline{M}^{k} ({\hat{T}^k (\tau-\alpha_{n})})\geq \underline{M}^{k} ({\hat{T}^k (t)})>u$; if
$\underline{M}^{p} ({\hat{T}^p (\tau)})\geq \underline{M}^{k} ({\hat{T}^k (\tau)})$, then $\alpha_{n}$
all take value $0$. Therefore, $\hat{T}^p (\tau-\alpha_{n})<h_{u}^{p}$.
Setting $n\rightarrow\infty$ and using (\ref{wu_bandit_3}) lead to $\hat
{T}^p (t)=\hat{T}^p (\tau)\leq h_{u}^{p}$, contradicting
 (\ref{wu_bandit_4}) again. Thus (\ref{E14_bandit_wu}) is proved.

We now turn to check the desired inequality of the lemma. For
any policy $T$,
\begin{equation}
\sum\limits_{k=1}^{d}\left ( T^k (t)\wedge h_{u}^{k}\right)  \leq
\sum\limits_{k=1}^{d}T^k (t)\wedge\sum\limits_{k=1}^{d}h_{u}^{k}%
=t\wedge\tilde{h}=\sum\limits_{k=1}^{d}\left ( \hat{T}^k (t)\wedge h_{u}%
^{k}\right)  . \label{wu_bandit_5}%
\end{equation}
Thus, by (\ref{E15_bandit_wu}),
\begin{align*}
\underbar{\it v} (T)=&\beta\sum\limits_{k=1}^{d}\E\left[\int_{0}^{\infty}e^{-\beta t}\underline{M}
^{k} ({T^k (t)})dT^k (t)\right]\\
=&\beta\sum\limits_{k=1}^{d}\E\left[\int_{0}^{\infty}e^{-\beta t}\int
_{0}^{\infty}I_{\{0<u<\underline{M}^{k} (T^k (t))\}}dudT^k (t)\right].
\end{align*}
By Fubini's theorem, %
\begin{align*}
\underbar{\it v} (T)=&\beta\sum\limits_{k=1}^{d}\E\left[\int_{0}^{\infty}\int_{0}^{\infty}e^{-\beta
t}I_{\{T^k (t)<h_{u}^{k}\}}dT^k (t)du\right]\\
=&\beta\sum\limits_{k=1}^{d}\E\left[\int_{0}^{\infty
}\int_{0}^{\infty}e^{-\beta t}d\left ( T^k (t)\wedge h_{u}^{k}\right)  du\right].
\end{align*}
Further using the partial integration and the inequality in (\ref{wu_bandit_5}%
) yields%
\begin{align*}
\underbar{\it v} (T)=&\beta^2\E\left[\int_{0}^{\infty}\int_{0}^{\infty}e^{-\beta t}\sum\limits_{k=1}%
^{d}\left ( T^k (t)\wedge h_{u}^{k}\right)  dtdu\right]\\
\leq&\beta^2\E\left[\int_{0}^{\infty
}\int_{0}^{\infty}e^{-\beta t}\sum\limits_{k=1}^{d}\left ( \hat{T}^k (t)\wedge h_{u}^{k}\right)  dtdu\right]\\
=&\underbar{\it v} (\hat{T}).
\end{align*}
This proves the desired result.
\end{proof}
\begin{remark}
An arm is said {\it deteriorating} if its reward rate  is pathwise nonincreasing
in time and a bandit is deteriorating if all its arms are  deteriorating. In this case, the
optimal policy is myopic in the sense that it plays the arms with the highest
immediate reward rate. In fact, this lemma can be generalized as: $v (T)\leq v (\hat T)$ for any restricted deteriorating bandits $\{ (X_t^k, {\cal M}_k), k=1, 2, \dots, n\}$, where ${\cal M}_k$ is the feasible set of stopping times associated with arm $k$.
\end{remark}
\begin{lemma}\label{lemma_pf_Th_2}
For any index policy $\hat T$, $v (\hat T)=\underbar{\it v} (\hat T)$.
\end{lemma}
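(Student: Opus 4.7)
The plan is to reduce $v(\hat T)=\underbar{\it v}(\hat T)$ to an arm-wise identity. Since Lemma~\ref{rsmb_eq_12} already supplies the inequality $v(T)\le\underbar{\it v}(T)$ term by term in $k$ with nonnegative integrands, the equality $v(\hat T)=\underbar{\it v}(\hat T)$ is equivalent to proving, for every arm $k$,
\[
\E\!\left[\int_0^\infty e^{-\beta t}X^k_{\hat T^k(t)}\,d\hat T^k(t)\right]=\beta\,\E\!\left[\int_0^\infty e^{-\beta t}\underline{M}^k(\hat T^k(t))\,d\hat T^k(t)\right].
\]

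Fix arm $k$ and enumerate the calendar operation intervals of arm $k$ under $\hat T$ as $\{[a_n,b_n]\}_{n\ge 1}$; set $\eta_n:=\hat T^k(a_n)$ and $\mu_n:=\hat T^k(b_n)$. The two defining conditions of an index policy---allocate to the arm with the pointwise largest Gittins index, and never switch away during an excursion of $M^k$ above $\underline{M}^k$---force the switching-out local times to lie on $\mathcal{M}^k_1$, and pin down $\mu_n$ as the first time in $\mathcal{M}^k_{\eta_n}$ at which $M^k$ falls to the threshold $m_n:=\max_{j\ne k}\underline{M}^j(\hat T^j(a_n))$, a quantity that stays constant on $[a_n,b_n]$ because the other arms are idle there. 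Consequently $\mu_n=\sigma^k_{\eta_n}(m_n)$, and two useful facts hold: (a) $M^k_v\ge M^k_{\mu_n}=m_n$ throughout $[\eta_n,\mu_n]$; (b) $\underline{M}^k(\eta_n)=M^k_{\eta_n}=m_{n-1}\ge m_n$, so the older running minimum dominates any freshly started running minimum on $[\eta_n,\mu_n]$.

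Next I apply the representation formula \eqref{representation} of Lemma~\ref{rsmb_th_2} (with $q\equiv 1$) at both $\eta_n$ and $\mu_n$, take $\mathcal{F}^k_{\eta_n}$-conditional expectations, and subtract. Property~(a) forces $\underline{M}^k_{\eta_n}(u)=\underline{M}^k_{\mu_n}(u)$ for $u\ge\mu_n$, so the two $\int_{\mu_n}^{\infty}$ tails cancel; property~(b) forces $\underline{M}^k_{\eta_n}(u)=\underline{M}^k(u)$ throughout $[\eta_n,\mu_n]$. What remains is the local-time identity
\[
\E\!\left[\int_{\eta_n}^{\mu_n}e^{-\beta u}X^k_u\,du\,\bigg|\,\mathcal{F}^k_{\eta_n}\right]=\beta\,\E\!\left[\int_{\eta_n}^{\mu_n}e^{-\beta u}\underline{M}^k(u)\,du\,\bigg|\,\mathcal{F}^k_{\eta_n}\right],
\]
which, via $u=\hat T^k(t)=\eta_n+(t-a_n)$ on $[a_n,b_n]$, translates to calendar time after multiplying both sides by the common factor $e^{-\beta(a_n-\eta_n)}$. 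Summing over $n$ yields the arm-wise identity, and summing over $k=1,\dots,d$ completes the proof.

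The main obstacle I expect is the measurability bookkeeping around the weights $e^{-\beta(a_n-\eta_n)}$, which couple information from arm $k$ (through $\eta_n$) with information from the other arms (through $a_n$). This is resolved exactly as in Section~\ref{GittinsIndexProcessSection}: invoke the independence of the arm filtrations $\{\mathcal{F}^j\}_j$ and apply Lemma~\ref{rsmb_th_2} conditionally on the enlargement $\mathcal{G}^k_{\eta_n}:=\mathcal{F}^k_{\eta_n}\vee\bigvee_{j\ne k}\mathcal{F}^j_{\infty}$, under which the single-arm value $V^k$, optimal stopping times $\sigma^k_\cdot(m)$, and Gittins indices $M^k,\underline{M}^k$ are all unchanged. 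The weight factors, being $\bigvee_{j\ne k}\mathcal{F}^j_{\infty}$-measurable given $\eta_n$, can then be pulled safely outside the inner $\mathcal{F}^k_{\eta_n}$-conditional expectations, legitimizing the per-interval-to-calendar-time transfer.
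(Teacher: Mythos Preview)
Your argument is natural and the central idea---applying the representation \eqref{representation} at two stopping times lying in $\mathcal{M}^k_1$ and subtracting, with the tail cancellation coming from $\underline{M}^k_{\eta}(u)=\underline{M}^k(u)$ for $\eta\in\mathcal{M}^k_1$---is exactly what the paper uses. But there is a genuine gap at the very first step: the decomposition of arm~$k$'s calendar activity into countably many closed intervals $[a_n,b_n]$ on each of which $\hat T^k$ grows at rate~$1$ (so that $u=\eta_n+(t-a_n)$) is not available for a general index policy. The policy conditions (1)--(4) allow $d\hat T^k/dt$ to take any value in $[0,1]$ whenever $\hat T^k(t)\in\mathcal{M}^k$, and the definition of an index policy forces exclusivity only over \emph{excursion} intervals; outside excursions---precisely when several arms sit simultaneously on their lower envelopes---processor sharing is permitted. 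In that regime there need be no interval structure at all, your identification $\mu_n=\sigma^k_{\eta_n}(m_n)$ (which encodes a particular tie-breaking rule) fails, and the per-interval transfer to calendar time via $u=\eta_n+(t-a_n)$ is illegitimate.

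The paper sidesteps this by never discretising. It works in arm-$k$ local time throughout, records the time spent on the other arms as $H_k(u)=\zeta^k(u)-u$, and uses the index-policy property only in the form $\zeta^k(u)=\zeta^k(g^k_u)+u-g^k_u$ with $g^k_u=\sup\{v<u:v\in\mathcal{M}^k_1\}$, i.e.\ $H_k$ is constant across each excursion. The layer-cake identity $e^{-\beta H_k(g^k_u)}=\beta\int_{H_k(g^k_u)}^\infty e^{-\beta s}\,ds$ then slices the varying discount into a one-parameter family, and for each fixed $s$ the representation \eqref{representation} is applied once at $0$ and once at the single $\tilde{\mathcal{F}}^k$-stopping time $D^k_{H_k^{-1}(s)}\in\mathcal{M}^k_1$; your property~(a) is precisely \eqref{lowerenvelop_2}. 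This continuous slicing delivers the arm-wise identity without any countability assumption on the switching pattern and without pinning $\mu_n$ to $\sigma^k_{\eta_n}(m_n)$. Your filtration-enlargement remark and the subtraction trick are both correct and are used by the paper; what is missing from your proposal is a framework that does not presuppose a discrete switching structure.
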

\begin{proof}
For every arm $k$, introduce two policy-free quantities
\[
D^k_{t}:=D^k_{t} (\omega)=\inf\{u> t:u\in {\cal M}^k_1 (\omega)\}\hbox{ and }
g^k_{t}:=g^k_{t} (\omega)=\sup\{u< t:u\in {\cal M}^k_1 (\omega)\} \text{ for } t>0, \]
 so that, { for every } $t>0\hbox{ and }u>t$,
\begin{equation}\label{lowerenvelop_1}
g^k_u\leq t\iff u\leq D^k_t\\
\end{equation}and
\begin{equation}\label{lowerenvelop_2}
\underbar{M}^k_{D^k_t} (u)=\underbar{M}^k (u).
\end{equation}
%{ and }\[
%{\cal B}_1=\{t\in {\cal M}^k_1 (\omega): D_{t} (\omega)>t\}.
%\]
Note that, under any index policy $\hat{T}= (\hat{T}^1, \cdots, \hat{T%
}^d)$, $\zeta^k (t)=\zeta^k (g^k_t)+t-g^k_t$. See \eqref{zeta} for the definition of $\zeta^k (t)$, where the identifier $k$ was suppressed.
Define $H_k (u)=\zeta^k (u)-u$ and its inverse $H^{-1}_k (s)=\inf\{u: H_k (u)>s\}$, which is the time  spent on arm $k$ when $s$ units of time has been spent on the other $d-1$ arms and an arm other than $k$ is to be selected to operation,  so that $H_k (u)\leq s\iff u\leq H_k^{-1} (s)$. Consequently, $H_k (g^k_u)\leq s\iff H_k^{-1} (s)\geq g^k_u\iff u\leq D_{H_k^{-1} (s)}$. Therefore, the expected reward under the index policy $\hat T$ can be computed by
\[
v (\hat T)=\sum_{k=1}^d\E \bigg[\int_0^{\infty}e^{-\beta \zeta^k (u)}X^k_udu\bigg]=\sum_{k=1}^d\E \bigg[\int_0^{\infty}e^{-\beta (\zeta^k (g^k_u)-g^k_u)}e^{-\beta u}X^k_udu\bigg].\]
A bit algebraic computation gives rise to
\begin{align*}
v (\hat T)=&\beta\sum_{k=1}^d\E \left[\int_0^{\infty}e^{-\beta u}X^k_u\int_{\zeta^k (g^k_u)-g^k_u}^\infty e^{-\beta s}dsdu\right]\notag\\
=&\beta\sum_{k=1}^d \left[\int_0^{\infty}e^{-\beta s}\E\left (\int_0^{D_{H_k^{-1} (s)}}e^{-\beta u}X^k_u du\right)ds\right].
\end{align*}
By the first equality in \eqref{representation} under $\mathcal{\tilde{F}}$ (see Lemma \ref{rsmb_th_2}), it then follows that
\begin{align*}
&\E\left (\int_0^{D_{H_k^{-1} (s)}}e^{-\beta u}X^k_u du\right)\notag\\
=&\E\left[\int_0^{\infty}e^{-\beta u}X^k_u du-\E\left (\left.\int_{D_{H_k^{-1} (s)}}^\infty e^{-\beta u}X^k_u du\right|{\cal \tilde{F}}^k_{D_{H_k^{-1} (s)}}\right)\right]\\
=&\beta\E\left[\int_0^{\infty}e^{-\beta u}\underbar{M}^k (u) du-\E\left (\left.\int_{D_{H_k^{-1} (s)}}^\infty e^{-\beta u}\underbar{M}_{D_{H_k^{-1} (s)}}^k (u) du\right|{\cal \tilde{F}}^k_{D_{H_k^{-1} (s)}}\right)\right]\\
=&\beta\E\left[\int_0^{\infty}e^{-\beta u}\underbar{M}^k (u) du-\E\left (\left.\int_{D_{H_k^{-1} (s)}}^\infty e^{-\beta u}\underbar{M}^k (u) du\right|{\cal \tilde{F}}^k_{D_{H_k^{-1} (s)}}\right)\right]\\
=&\beta\E\left[\int_0^{D_{H_k^{-1} (s)}}e^{-\beta u}\underbar{M}^k (u) du\right],
\end{align*}
where the last equality follows from the equality in \eqref{lowerenvelop_2}. Consequently,
\begin{equation*}
v (\hat T)=\beta^2\sum_{k=1}^{\infty}\E \left[\int_0^{\infty}e^{-\beta s}\int_0^{D_{H_k^{-1} (s)}}e^{-\beta u}\underbar{M}^k (u) duds\right]\\
=\underbar{\it v} (\hat T).
\end{equation*}
This proves the desired equality.
\end{proof}
\section{Conclusions}\label{Conclusions}
By the extended  optimal stopping theory to the  problem with restricted stopping times,  and combining the martingale techniques and Whittle (1980)'s retirement option, we have
developed a general and new framework that generalizes and unifies the theories of multi-armed bandit processes in discrete time, continuous time, which apply also to other mixed settings
 that can occur practically but are not solved by the existing theory of MAB processes.

While the mathematical form of Gittins indices has obtained in terms of  an essential supremum of the reward rate over a class of stopping times, it is generally difficult to precisely or numerically compute the Gittins indices, even with no restriction on switches. The only exceptions are the cases of MAB in discrete time and semi-Markovian fashion with finite states by means of achievable region method, and MAB in continuous time where every arm is a standard job, namely, it has a processing time and  a constant reward is collect on the completion of the job (Gittins et al., 2011). Under the general framework of RMAB processes, it is challenging how the Gittins indices should be computed, even if an arm evolves in a Markovian chain or semi-Markovian fashion, both with finite number of states, or an arm acts as a standard job. This difficulty reflects the impact  of the restrictions on switches, which imposes a challenging task for the Gittins index rule developed in this paper to be applied in real world problems.

In this study, the MAB processes considered are standard in the sense that the number $d$ of arms is fixed, the arms that are inactive are frozen and contribute no reward and the switches from arm to arm cause neither operation delay nor switch cost and the arms are statistically independent. When these standard conditions are violated, the behaviors of the bandit processes and thus their optimal policies are quite different. The models extend the standard MAB processes include, for example,  restless MAB processes (Whittle, 1988 and Bertsimas and Niño-Mora, 2000) in which the inactive arms evolve also, open bandit processes (Whitle 1981, Weiss, 1988 and Wu and Zhou, 2013) in which new arms can arrive from time to time, MAB processes with switching cost/delay (Banks and Sundaram, 1994 and Jun, 2004). It has turned
 out that the optimality of the Gittins index policies for the standard RMAB processes are largely dependent on those conditions.  It is unclear how the restrictions on switches among arms could impact the policies of those nonstandard MAB processes.
\bigskip

%With the theory established in this paper, we can discuss how to optimally scheduling
%a set of jobs on a single unreliable machine if the switching form one job to another can
%only be allowed when the machine is in up-time (Cai et al. 2009, 2014), which will be reported elsewhere.

%\section*{Acknowledgement}
%\footnote
{%This research of the authors are partly supported by NSFC (grant No. 71371074) and the 111 Project (grant No. B14019).}
%%%%%%%%%%%%%%%%%%%%%%%%%%%%%%%%%%%%%%%%%%%%%

\renewcommand{\baselinestretch}{0.8}
{%\small

}
\end{document}